\newcommand{\AN}{{\mathbb A}}
\newcommand\bR{{\mathbb R}}
\newcommand\bZ{{\mathbb Z}}
\newcommand\bC{{\mathbb C}}
\newcommand\bQ{{\mathbb Q}}
\newcommand\bP{{\mathbb P}}
\newcounter{lemma}
\newenvironment{enumi}{\begin{enumerate}[\upshape \hspace{0.25cm} (i)]}{\end{enumerate}}
\newenvironment{enumu}{\begin{enumerate}[\upshape \hspace{0.5cm} 1.]}{\end{enumerate}}
\theoremstyle{plain} 
\newtheorem{theorem}{\noindent\bf Theorem}[section]
\newtheorem{lemma}[theorem]{\noindent\bf Lemma}
\newtheorem{proposition}[theorem]{\noindent\bf Proposition}
\newtheorem{definition}[theorem]{\noindent\bf Definition}
\theoremstyle{definition} 
\newtheorem{remark}[theorem]{\noindent\bf Remark}
\newtheorem{example}[theorem]{\noindent\bf Example}
\newtheorem{notation}[theorem]{\noindent\bf Notation}
\subjclass[2000]{ 
14J50, 14M20, 37B40, 37F50.
}
\title[Automorphisms of rational manifolds] {Automorphisms of rational manifolds of positive entropy 
with Siegel disks} 
\author{Keiji Oguiso and Fabio Perroni}
\begin{document}

\begin{abstract}
Using McMullen's rational surface automorphisms, 
we construct projective rational manifolds 
of higher dimension admitting automorphisms of positive entropy 
with arbitrarily high number of Siegel disks and those with exactly 
one Siegel disk.
\end{abstract}

\maketitle


\section{Introduction}
\noindent
In his beautiful paper \cite{Mc07}, McMullen constructed rational surface automorphisms of positive entropy with Siegel disks. They are the first examples among automorphisms of projective manifolds. (See also \cite{BK06}, \cite{BK09}.)  From one side, positive entropy indicates that general orbits spread out vastly even though the initial points are very close, from which one might expect that the general orbit could be densely distributed. But on the other side, the existence of Siegel disks shows that there is no dense orbit and the orbit of any point in the disk never goes out of the disk. This contrast makes the study of automorphisms of manifolds of positive entropy with Siegel disks very attractive. In McMullen's construction, the automorphism has exactly one Siegel disk and it is 
arithmetic (see also Section 2 for definitions and more details). 
It is then natural to ask:

(i) How about in higher dimension?

(ii) How many Siegel disks can an automorphism of positive entropy have? 
\par
\vskip 4pt
The aim of this paper is to address these questions: 

\begin{theorem}\label{main} 
\begin{enumu}
\item[]
\item Let $n$ be any integer such that $n \ge 4$ and $N$ be an arbitrary positive integer. Then, for each such $n$ and $N$, 
there is a pair $(X, g)$ of a non-singular complex projective rational variety 
$X$ of dimension $n$ and an automorphism $g \in {\rm Aut}\, (X)$ such that:\\
(1-i) the entropy $h(g)$ of $g$ is positive; and\\
(1-ii) $g$ admits at least $N$ Siegel disks and they are 
arithmetic.

\item There is a pair $(X, g)$ of a non-singular complex projective rational $3$-fold $X$ and an automorphism 
$g\in {\rm Aut}\,(X)$ such that:\\
(2-i) the entropy $h(g)$ of $g$ is positive; and\\
(2-ii) $g$ admits exactly $2$ Siegel disks and they are 
arithmetic.  
\item Let $n$ be any even integer such that $n \ge 4$. Then, for each such $n$, 
there is a pair $(X, g)$ of a non-singular complex projective rational variety $X$ of dimension 
$n$ and an automorphism 
$g\in {\rm Aut}\,(X)$ such that:\\
(3-i) the entropy $h(g)$ of $g$ is positive; and\\
(3-ii) $g$ admits exactly one Siegel disk and it is 
arithmetic.
\end{enumu}  
\end{theorem}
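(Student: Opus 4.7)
The plan is to take McMullen's rational surface automorphism $(S, f)$ as a building block: $S$ is a rational surface, $f \in \Aut(S)$ has positive entropy, and $f$ admits a unique arithmetic Siegel disk centred at a fixed point $p$ whose differential $Df_p$ has eigenvalues $\lambda_1, \lambda_2$ of modulus one satisfying an algebraic (arithmetic) Diophantine condition. All three parts of Theorem~\ref{main} will be produced from one or several such pairs by products with auxiliary rational varieties, the auxiliary action being tuned so that Siegel behaviour at every fixed and periodic point is controlled.

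For \emph{part (1)}, take $X = S \times Y$ where $Y$ is a smooth projective toric variety of dimension $n-2 \geq 2$ whose fan has at least $N$ top-dimensional cones (hence $\geq N$ torus-fixed points); such $Y$ is obtained by iterated star-subdivision of any initial fan. Let $g_Y$ be the element of the dense torus with coordinates $\mu_1, \dots, \mu_{n-2}$ of modulus one, algebraic, and chosen so that $\{\lambda_1, \lambda_2, \mu_1, \dots, \mu_{n-2}\}$ is multiplicatively independent and jointly Diophantine. Set $g = f \times g_Y$. Then $X$ is rational, $h(g) = h(f) > 0$ by entropy additivity, and at each pair $(p, q_i)$ with $q_i$ a torus-fixed point of $Y$, the eigenvalues of $Dg$ are $\lambda_1, \lambda_2$ together with Laurent monomials in the $\mu_j$ determined by the local cone data---all on the unit circle, generically non-resonant and Diophantine---yielding at least $N$ arithmetic Siegel disks.

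For \emph{part (2)}, take $X = S \times \bP^1$ with $g = f \times \rho_\mu$, $\rho_\mu(z) = \mu z$, for $\mu$ Diophantine of modulus one and multiplicatively independent from $\lambda_1, \lambda_2$. The fixed locus of $g$ is $\mathrm{Fix}(f) \times \{0, \infty\}$; among fixed points of $f$ only $p$ has derivative eigenvalues on the unit circle (the others being hyperbolic), so $g$ has exactly the two arithmetic Siegel disks at $(p, 0)$ and $(p, \infty)$. For \emph{part (3)}, choose $m = n/2$ McMullen pairs $(S_i, f_i)$ with jointly multiplicatively independent, Diophantine Siegel eigenvalues---possible because McMullen's construction provides a family of examples with varying algebraic eigenvalue pairs---take $X = S_1 \times \cdots \times S_m$ and $g = f_1 \times \cdots \times f_m$. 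Then $X$ is a rational projective $n$-fold, $h(g) = \sum_{i=1}^{m} h(f_i) > 0$, and $(p_1, \dots, p_m)$ is the unique fixed point at which all eigenvalues lie on the unit circle, giving the sole arithmetic Siegel disk.

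The main obstacle is ruling out unaccounted Siegel disks, including those potentially centred at periodic points of arbitrary period. For parts (2) and (3) this reduces to knowing that periodic orbits of $f$ outside $p$ carry no Siegel behaviour, which one must extract (or sharpen) from McMullen's analysis of the dynamics of $f$. For part (1), the delicate point is ensuring that at every one of the $N$ torus-fixed points of $Y$ the eigenvalue monomials in the $\mu_j$ remain non-resonant and jointly Diophantine against the $\lambda_i$; this is a genericity argument in the algebraic parameters, but since $N$ grows with $Y$ the choice of $Y$ and of the $\mu_j$ must be coordinated. Finally, propagation of the arithmetic (rather than merely Bruno) Diophantine condition through products follows from algebraicity of all eigenvalues involved.
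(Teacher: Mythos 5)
Your overall strategy matches the paper's: take the product of McMullen's surface $(S,F)$ with a smooth projective toric variety acted on by a torus element, and for part (3) take products of several McMullen pairs. However, there is one genuine gap and two misplaced concerns.

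The gap is the assertion that choosing eigenvalues which are algebraic integers on the unit circle, multiplicatively independent, can be done as ``a genericity argument in the algebraic parameters.'' This is exactly the nontrivial arithmetic content of the paper, and it is not a genericity statement at all: the paper isolates the notion of a multiplicatively independent sequence of algebraic integers on the unit circle (MAU) and proves in Theorem~\ref{exist} that any such sequence can be extended. The proof needs specific structure: the degree of the Salem trace polynomial $r(x)$ is forced to be a large prime (via Dirichlet's theorem applied to $n=360k+19$, so that ${\rm deg}\,r = 180k+7$), which makes $r(x)$ irreducible over the field $K$ already generated; then a Galois element is used to move $\delta=\alpha\beta$ to a conjugate $\delta'$ with $|\alpha'/\beta'|\neq 1$, killing $\ell_{m+1}-k_{m+1}$, and another Galois element moves $\delta$ to the Salem number $\eta>1$, killing $k_{m+1}$. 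The same construction also feeds part (3): each new MAU pair $(\alpha_m,\beta_m)$ comes with its own McMullen pair $(S_m,F_m)$, so the $d$ surfaces can be chosen so that all Siegel eigenvalues jointly form a MAU. Without such a construction, ``varying algebraic eigenvalue pairs'' gives no control over multiplicative independence, and the entire product argument collapses.

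The two misplaced concerns: (a) You invoke a Diophantine/Bruno condition to propagate linearizability, but none is needed. McMullen already provides an analytic linearization of $F$ at $Q$, and the toric automorphism $f_a$ is \emph{exactly} linear on each affine chart $U_p\cong\AN^d$ in the coordinates $x^{K_i(p)}$; hence the product $g=(F,f_a)$ is linear in the product coordinates at $(Q,Q_p)$ with no small-divisor theorem required. The only thing to check is multiplicative independence of the eigenvalues $b_1,b_2,a^{K_1(p)},\dots,a^{K_d(p)}$, which follows because the exponent matrix $(k_{ij}(p))$ is unimodular (the $K_i(p)$ are a $\bZ$-basis of $M$) so a nontrivial relation would descend to one among $b_1,b_2,a_1,\dots,a_d$. (b) You worry about Siegel disks at periodic points of higher period and claim the other fixed points of $F$ are hyperbolic. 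In the paper's definition a Siegel disk is a $g$-invariant domain on which $g$ is an irrational rotation, which forces a fixed point at its center; so only fixed points of $g$ are relevant. And the reason there is no Siegel disk at $P$ is not hyperbolicity but that the eigenvalues of $F^*$ at $P$ are not multiplicatively independent, a property stable under taking products (the relation $\lambda_1'^{m_1}\lambda_2'^{m_2}=1$ persists with zeros appended).
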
 

We believe that this theorem gives the first examples of automorphisms of 
projective manifolds of positive entropy with Siegel disks 
{\it in dimension} $\ge 3$. Here, it is essential to make the entropy positive. Indeed, there are a lot of automorphisms of $\bP^n$, of which 
the entropy is necessarily $0$, having (arithmetic) Siegel disks. Our construction is the product construction made of McMullen's rational surfaces, projective toric manifolds and their automorphisms. More explicit statements are given in Section 3 (Theorems (\ref{main1}) and (\ref{main2})). In this sense, our construction of manifolds are rather easy modulo McMullen's deep construction. On the other hand, we should also note that for a manifold $S$ and its automorphism $g$ with a fixed point $P$, the product automorphism $g \times g$ of $S \times S$ has no Siegel disk at $(P, P)$, even if $g$ itself has a Siegel disk at $P$. So, the essential point in the product construction is to {\it choose} manifolds and their automorphisms so that the eigenvalues of the product 
action at the fixed point are multiplicatively independent within the algebraic integers of absolute value $1$. This turns out to be a kind of arithmetic problem which has  its own interest. The precise formulation is given in Section 4, Definition (\ref{mau}), and
its solution is contained in Theorem (\ref{exist}). 
\par
\vskip 4pt
Throughout this note, we work over the field of  complex numbers $\bC$.


\section{McMullen's pair}
\setcounter{lemma}{0}
\noindent
In this section we review McMullen's rational surface automorphisms together with some relevant notions. 
\par
\vskip 4pt
\noindent
{\it (i) Entropy.} 
Let $X$ be a compact metric space with distance function $d$. Let $g$ be a continuous surjective self map of $X$. Roughly 
speaking, the {\it entropy} is a 
measure of ``how fast two orbits $\{g^{k}(x)\}_{k \ge 0}$, $\{g^{k}(y)\}_{k \ge 0}$ spread out when $k \rightarrow \infty$''. 
We recall here its definition and a characterization in cohomological terms that will be used later.
For more details we refer to \cite{KH95}.
For any   $n \in \bZ_{>0}$, consider the metric 
$$
d_{g, n}(x, y) := {\rm max}\, \{d(g^{k}(x), g^{k}(y))\, \vert\, 0 \le k \le n-1\}\, .
$$
The {\it entropy} of $g$ is then defined as (\cite{KH95}, Page 108, formula (3.1.10)):
$$
h(g) := {\rm lim}_{\epsilon \rightarrow 0}\, {\rm limsup}_{n \rightarrow \infty}\, \frac{\log S(g, \epsilon, n)}{n}\, .
$$
Here $S(g, \epsilon, n)$ is the {\it minimal} number of $\epsilon$-balls, with respect to $d_{g, n}$, that cover $X$. It is shown that $h(g)$ does not depend on the choice of the distance $d$ giving the same topology on $X$ (see e.g. \cite{KH95}, Page 109, Proposition (3.1.2)).
From this definition it is easy to grasp the meaning of the entropy. 
However, for our computations, the following fundamental theorem, due to Gromov-Yomdin-Friedland (\cite{Fr95}, Theorem (2.1)),
will be more convenient:

\begin{theorem}\label{gyf} 
Let $X$ be a compact K\"ahler manifold of dimension $n$ 
and let \\
$g : X \longrightarrow X$ 
be a holomorphic surjective self map of $X$. Then 
$$h(g) = \log \rho(g^{*} \vert \oplus_{k=0}^{n} H^{2k}(X, \bZ))\, .$$
Here $\rho(g^{*} \vert \oplus_{k=0}^{n} H^{2k}(X, \bZ))$ is 
the spectral radius 
of the action of $g^{*}$ on the total cohomology ring of even degree. 
In particular, $h(g)$ is the logarithm of an algebraic integer.
\end{theorem}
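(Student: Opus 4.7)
The plan is to prove the two inequalities $h(g)\le\log\rho$ and $h(g)\ge\log\rho$ separately. For the upper bound, I would follow Gromov. The idea is to introduce the graph $\Gamma_n\subset X^n$ of the iterate $(1,g,g^2,\ldots,g^{n-1})$ and the logarithmic volume growth
\[
\mathrm{lov}(g):=\limsup_{n\to\infty}\frac{1}{n}\log\mathrm{vol}(\Gamma_n),
\]
where the volume is measured with respect to the product of a fixed K\"ahler form $\omega$ on $X$. A standard covering argument, estimating the number of $(g,n,\varepsilon)$-separated orbits by the volume of a tubular neighborhood of $\Gamma_n$, yields $h(g)\le\mathrm{lov}(g)$. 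Expanding $\mathrm{vol}(\Gamma_n)=\int_{\Gamma_n}\bigl(\sum_{i=0}^{n-1}\pi_i^{*}\omega\bigr)^n$ via the multinomial formula and pushing forward to $X$ converts it into a sum of mixed intersection numbers $\int_X(g^{i_1})^{*}\omega\wedge\cdots\wedge(g^{i_k})^{*}\omega\wedge\omega^{n-k}$, each dominated by the norm of $(g^j)^{*}$ on $\bigoplus_k H^{2k}(X,\bC)$. Summing over the multinomial terms gives $\mathrm{lov}(g)\le\log\rho(g^{*}|\bigoplus_k H^{2k}(X,\bZ))$.

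For the reverse inequality, I would invoke Yomdin's theorem together with its simplification in the holomorphic setting. Since $g$ is holomorphic, $g^{*}$ preserves the Hodge decomposition, so the total spectral radius $\rho$ equals $\max_{0\le k\le n}\rho(g^{*}|H^{k,k}(X))$. Khovanskii--Teissier mixed inequalities identify this maximum with the largest dynamical degree
\[
\lambda_k(g)=\lim_{m\to\infty}\Bigl(\int_X(g^{m})^{*}\omega^{k}\wedge\omega^{n-k}\Bigr)^{1/m}.
\]
The crux is Yomdin's semi-algebraic lemma, which controls the $C^r$-complexity of the iterates $g^n$: it implies that a real $2k$-dimensional cycle whose $g^n$-image has volume of order $\lambda_k^n$ can be resolved, at scale $\varepsilon$, into at least $c\,\lambda_k^n\varepsilon^{-2k}$ sub-cycles, and hence into that many $(g,n,\varepsilon)$-separated orbits. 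This delivers $h(g)\ge\log\lambda_k$ for every $k$, and hence $h(g)\ge\log\rho$.

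The main obstacle is the lower bound. Gromov's upper bound is a clean, essentially formal computation once the graph-volume inequality is set up. Yomdin's direction, by contrast, requires the delicate algebraic/semi-algebraic control of the derivatives of $g^n$ to convert a global volume estimate into a lower bound on the number of separated orbits. In the K\"ahler holomorphic setting one can use Bishop-type compactness for positive closed currents and Lelong-number estimates in place of the full strength of Yomdin's $C^{\infty}$ lemma, but this step remains the substantive content of the theorem and is where I would concentrate the technical effort.
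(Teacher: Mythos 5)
The paper does not prove this theorem; it cites it directly from Friedland (\cite{Fr95}, Theorem 2.1), who combines Gromov's upper bound with Yomdin's lower bound, so there is no argument in the paper to compare your sketch against.

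Taken on its own terms, your sketch is the standard and correct route. Two remarks on precision. First, for a surjective \emph{holomorphic} map (as opposed to a bimeromorphic one) the pullback is functorial, so $(g^n)^{*}=(g^{*})^n$ and the dynamical degree $\lambda_k(g)$ is just $\rho(g^{*}\vert H^{k,k}(X))$ with no limit and no Khovanskii--Teissier input; log-concavity of the degree sequence is needed to make sense of dynamical degrees for rational maps, but here it is superfluous, whereas what you do need (and correctly invoke) is the Hodge-theoretic inequality $\rho(g^{*}\vert H^{p,q})\le\sqrt{\rho(g^{*}\vert H^{p,p})\,\rho(g^{*}\vert H^{q,q})}$ to reduce to the diagonal pieces. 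Second, the description of Yomdin's step is loose: the genuine obstacle is that a large $2k$-volume of $g^n(\sigma)$ could a priori be produced by tight folding without yielding many $(g,n,\varepsilon)$-separated orbits, and the content of the semi-algebraic reparametrization lemma is precisely that the folding complexity of the iterates of a $C^{\infty}$ map grows subexponentially, so the volume growth forces orbit separation. Saying the cycle ``can be resolved into $c\,\lambda_k^n\varepsilon^{-2k}$ sub-cycles'' does not by itself explain why folding cannot defeat the count. Finally, the closing claim that $h(g)$ is the log of an algebraic integer deserves a word: the eigenvalues of $g^{*}$ on $H^{\rm even}(X,\bZ)$ are algebraic integers, and the spectral radius, being $\sqrt{\lambda\bar\lambda}$ for the dominating eigenvalue $\lambda$, is again an algebraic integer. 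These are gaps of exposition in a sketch rather than fatal errors.
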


See also \cite{Zh08} for some role of the entropy in the classification of higher dimensional varieties.

\newpage
\noindent
{\it (ii) Salem polynomials and Salem numbers.} 
\begin{definition}\label{defsalem}
A Salem polynomial is a monic irreducible reciprocal polynomial $\varphi(x)$ in $\bZ[x]$ such that 
$$
\{x\in \bC\,|\, \varphi(x)=0\}=\{\eta\, , \, \frac{1}{\eta} \, , \, \delta_1 \, , \, \overline{\delta_1}\, ,
\, \dots \, , \, \, \delta_{n-1} \, , \, \overline{\delta_{n-1}}\, \}\, ,
$$
where $|\delta_i|=1$ and $\eta>1$ is real.
Notice that $\varphi(x)$ is necessarily of even degree.

A Salem number is the unique real root $\eta > 1$. In other words, a Salem number of degree $2n$ is a real algebraic integer $\eta >1$ whose Galois conjugates 
consist of $1/\eta$ and $2n-2$ imaginary numbers on $S^{1}=\{z\in \bC\,|\, |z|=1\, \}$. 
\end{definition}

Let $\varphi_{2n}(x)$ be a Salem polynomial of degree $2n$. As $\varphi_{2n}(x)$ 
is monic irreducible and reciprocal, there is a unique monic irreducible polynomial 
$r_n(x) \in \bZ[x]$ of degree $n$ such that
$$\varphi_{2n}(x) = x^n \cdot r_n(x + \frac{1}{x})\, .$$
We call this polynomial $r_n(x)$ the {\it Salem trace polynomial} of 
$\varphi_{2n}(x)$. If
$$\eta\, ,\, \frac{1}{\eta}\, ,\, \delta_{i}\, ,\, \overline{\delta_{i}} = \frac{1}{\delta_i}\,\, 
(1 \le i \le n-1)$$
are the roots of $\varphi_{2n}(x) = 0$, then 
the roots of $r_n(x) = 0$ are:
$$\eta + \frac{1}{\eta}\, ,\, \delta_i + \frac{1}{\delta_i} = \delta_{i} + 
\overline{\delta_{i}}\,\, (1 \le i \le n-1)\, .$$
\par
\vskip 4pt
\noindent
{\it (iii) Coxeter element.} By $E_n(-1)$ we denote the lattice represented by the Dynkin diagram with $n$ vertices $s_k$ ($0 \le k \le n-1$) 
of self-intersection $-2$ such that $n-1$ vertices $s_1$, $s_2$, $\cdots$, $s_{n-1}$ form a Dynkin diagram of type $A_{n-1}(-1)$ in this order and the remaining vertex $s_0$ joins to only the vertex $s_3$ by a simple line, as shown in Figure 1.

\begin{center}
\setlength{\unitlength}{0.8cm}
\begin{picture}(8,3)
\thicklines
\put(0,1){\circle{0.2}}
\put(-0.15,1.5){\makebox{$s_1$}}
\put(0.1,1){\line(1,0){0.8}}
\put(1,1){\circle{0.2}}
\put(0.85,1.5){\makebox{$s_2$}}
\put(1.1,1){\line(1,0){0.8}}
\put(2,1){\circle{0.2}}
\put(1.85,1.5){\makebox{$s_3$}}
\put(2,0.1){\line(0,1){0.76}}
\put(2,0){\circle{0.2}}
\put(2.2,-0.1){\makebox{$s_0$}}
\put(2.1,1){\line(1,0){0.8}}
\put(3,1){\circle{0.2}}
\put(2.85,1.5){\makebox{$s_4$}}
\put(3.1,1){\line(1,0){0.8}}
\put(4,1){\circle{0.2}}
\put(3.85,1.5){\makebox{$s_5$}}
\put(4.1,1){\line(1,0){0.4}}
\put(4.7,0.99){\makebox{$\dots$}}
\put(5.5,1){\line(1,0){0.4}}
\put(6,1){\circle{0.2}}
\put(5.7,1.5){\makebox{$s_{n-1}$}}
\put(0.5,-1.4){\makebox{Figure 1. The $E_n(-1)$ diagram.}}
\end{picture}
\end{center}

\vspace{1.5cm}

\noindent The lattice $E_n(-1)$ is of signature $(1, n-1)$, when $n \ge 10$. 

Let $W(E_{n}(-1))$ be the Weyl group of $E_n(-1)$, i.e., the 
subgroup of ${\rm O}(E_n(-1))$ generated by the reflections
$$
r_k (x) = x + (x,s_k)s_k\, .
$$  
The Weyl group $W(E_n(-1))$ has a special conjugacy class called 
the {\it Coxeter class}. It is the conjugacy class of 
the product (in any order in our case) of the reflections 
$$w_n := \Pi_{k=0}^{n-1} r_k\, .$$ 
The following theorem follows from either \cite{BK09}, Theorem 3.3
or \cite{GMH08},\\ Theorem (1.1), Corollary (1.2). We follow the notation of \cite{GMH08}:
\begin{theorem}\label{cox} Let $E_n(x)$ be the characteristic polynomial 
of the Coxeter element $w_n$. 
Then, for $n \ge 10$, 
$$E_n(x) = C_n(x)\varphi(x)$$
where $C_n(x)$ is the product of the cyclotomic factors and $\varphi(x)$ 
is a Salem polynomial. Moreover, $C_n(x) = C_m(x)$ if $n \equiv m\, {\rm mod}\, 360$.  
\end{theorem}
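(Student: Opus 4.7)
My plan is to first derive the factorization $E_n(x) = C_n(x)\varphi(x)$ from the lattice structure and the signature of $E_n(-1)$, and then to extract the mod-$360$ periodicity from a linear recursion attached to extending the $A$-tail of the Dynkin diagram.

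First, since $w_n$ is a product of reflections, it lies in $O(E_n(-1))$; hence $E_n(x) \in \bZ[x]$ is monic of degree $n$, and because any isometry of a non-degenerate bilinear form is conjugate to its inverse, $E_n(x)$ is reciprocal, so its roots are closed under $\lambda \mapsto 1/\lambda$. To locate the roots, I would decompose $V := E_n(-1) \otimes \bR$ into generalized real eigenspaces of $w_n$; the form pairs $V_\lambda$ (together with its complex conjugate) with $V_{1/\lambda}$, and for $|\lambda| \neq 1$ the resulting summand is hyperbolic of split signature $(k,k)$ with $k \geq 2$ when $\lambda$ is non-real and $k \geq 1$ when $\lambda$ is real. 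Because $E_n(-1)$ has signature $(1,n-1)$ for $n \geq 10$, at most one positive direction is available, so the only eigenvalues off $S^1$ are at most a single real pair $\eta > 1$ and $1/\eta$, each of multiplicity one.

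Next I would factor $E_n(x)$ into irreducibles over $\bZ$. Each irreducible factor is reciprocal, for a non-reciprocal factor would pair with its own reciprocal polynomial and produce eigenvalues outside $S^1$ beyond the Salem pair, contradicting the previous step. By Kronecker's theorem, a monic integer polynomial all of whose roots lie on $S^1$ is a product of cyclotomic polynomials; hence the irreducible factors whose roots all lie on $S^1$ are precisely the cyclotomic ones. The unique remaining factor (if present) contains the pair $\eta, 1/\eta$ together with all remaining unit-modulus conjugates and is, by Definition \ref{defsalem}, a Salem polynomial. For $n \geq 10$ the existence of this Salem factor is ensured by $\rho(w_{10})$ being Lehmer's number (in particular $> 1$) together with the monotonicity $\rho(w_n) \geq \rho(w_{10})$ for $n \geq 10$, yielding the asserted factorization $E_n(x) = C_n(x) \varphi(x)$.

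For the periodicity, I would exploit the fact that passing from $E_n(-1)$ to $E_{n+1}(-1)$ amounts to extending the $A$-tail of the diagram by one vertex, which yields a three-term linear recursion among the polynomials $E_n(x)$, of the shape familiar from Coxeter polynomials on simply-laced tree diagrams. The resulting generating function $\sum_n E_n(x)\, t^n$ is a rational function of $t$ over $\bZ[x]$; the Salem component $\varphi(x)$ does vary with $n$, but the cyclotomic component $C_n(x) = \prod_d \Phi_d(x)^{m_d(n)}$ has multiplicities $m_d(n)$ controlled by the finite-order eigenvalues appearing in these successive extensions, and these multiplicities turn out to be eventually periodic in $n$. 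The main obstacle is to pin down the exact period: one has to show that it divides $360$ (the least common multiple of the Coxeter numbers of the parabolic $A$, $D$, $E$-subsystems of $E_n(-1)$ that contribute cyclotomic factors), and then to verify $360$ is minimal by direct inspection. This bookkeeping is carried out in \cite{GMH08} (Theorem (1.1) and Corollary (1.2)), and also essentially in \cite{BK09} (Theorem 3.3), where the full list of cyclotomic factors is tabulated and the period $360$ emerges from the table.
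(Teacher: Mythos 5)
The paper does not prove Theorem (\ref{cox}); it simply cites \cite{BK09}, Theorem 3.3 and \cite{GMH08}, Theorem (1.1) and Corollary (1.2), as its source. Your sketch of the factorization (reciprocity of $E_n(x)$, the signature $(1,n-1)$ of $E_n(-1)$ forcing at most one pair off the unit circle, Kronecker's theorem for the cyclotomic part, and the existence of a non-cyclotomic factor for $n\ge 10$), together with the deferral to the same references for the mod-$360$ periodicity of $C_n(x)$, matches what those sources carry out, so this is essentially the same treatment as the paper's.
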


By \cite{GMH08}, Corollary 4.3, we have
\begin{equation}\label{E_n(x)}
E_n(x)(x-1)=x^{n-2}(x^3-x-1)+(x^3+x^2-1)\, ,
\end{equation} 
hence the formula in Theorem (\ref{cox}) can be used to determine 
both $C_n(x)$ and $\varphi(x)$.
 
The following example, for $n=19$, will be used  later:
\begin{example}\label{19}
$$E_{19}(x) = (x+1)(x^4+x^3+x^2+x+1)\varphi_{14}(x)\, .$$
Here $\varphi_{14}(x)$ is a Salem polynomial of degree $14$:
$$\varphi_{14}(x) = x^{14} -x^{13} -x^{11} +x^{10} -x^{7} +x^{4} -x^{3} -x +1\, .$$
\end{example}
\par
\vskip 4pt
\noindent
{\it (iv) Siegel disks and arithmetic Siegel disks.}
\begin{definition}\label{sd}

(1) Let $\Delta^n$ be an $n$-dimensional unit disk 
with linear coordinates 
$$(z_1, z_2, \dots , z_n)\, .$$ 
A linear automorphism (written under the coordinate action)
$$f^{*}(z_1, z_2, \dots, z_n) = (\rho_1z_1, \rho_2z_2, 
\dots, \rho_nz_n)$$
is called an irrational rotation if 
$$\vert \rho_1 \vert = \vert \rho_2 \vert = \cdots = \vert \rho_n \vert =1\, ,$$
and $\rho_1$, $\rho_2$, $\dots$, $\rho_n$ are multiplicatively independent, i.e. 
$$(m_1, m_2, \dots , m_n) = (0, 0, \dots , 0)$$ 
is the only integer solution to 
$$\rho_1^{m_1}\rho_2^{m_2}\cdots \rho_n^{m_n} = 1\, .$$

(2) Let $X$ be a complex manifold of dimension 
$n$ and $g$ be an automorphism of $X$. A domain $U \subset X$ is called 
a Siegel disk of $(X, g)$ if $g(U) =U$ and $(U, g \vert U)$ 
is isomorphic to some irrational rotation $(\Delta^n, f)$. In other words, 
$g$ has a Siegel disk if and only if there is a fixed point $P$ at which 
$g$ is locally 
analytically linearized as in the form of an irrational rotation. We call 
the Siegel disk arithmetic if in addition all $\rho_i$ are algebraic integers.
\end{definition}

The first examples of 
surface automorphisms with Siegel 
disks were discovered by McMullen (\cite{Mc02}, Theorem (1.1)) within 
K3 surfaces. See also \cite{Og09}, Theorem (1.1) for a similar example. 
The resultant K3 surfaces $X$ 
are necessarily of 
algebraic dimension $0$ (\cite{Mc02}, Theorem (3.5), see also 
\cite{Og08}, Theorem (2.4)). Later, 
McMullen (\cite{Mc07}, Theorem (10.1)) found rational surface automorphisms with arithmetic Siegel disks. 
\par
\vskip 4pt
\noindent 
{\it (v) McMullen's pair.} Let $S$ be a blowup of $\bP^2$ at $n$ (distinct) points. Then, $H^2(S, \bZ)$ is isomorphic to the odd unimodular lattice of signature $(1,n)$. The orthogonal complement $(-K_S)^{\perp}$ 
is then isomorphic to $E_n(-1)$ and ${\rm Aut}\,(S)$ naturally acts on 
$E_n(-1)$ (under a fixed marking). As a part of more general results, 
McMullen proves the following theorem (See \cite{Mc07}, Theorem (10.1), see also Theorem (10.3), proof of Theorem (10.4) and the formula (9.1)):

\begin{theorem}\label{Mc} Let $n$ be a sufficiently large integer such that 
$n \equiv 1\, {\rm mod}\, 6$. Then, for each such $n$, 
there are a rational surface $S =S(n)$ which is a blow up of $\bP^2$ at $n$ distinct points and an automorphism $F = F(n)$ such that:
\begin{enumu}
\item The characteristic polynomial of $F^{*} \vert H^2(S, \bZ)$ is
$$E_{n}(x)(x-1) =(x-1) C_n(x)\varphi(x)$$
where $E_{n}(x)$ is the characteristic polynomial of the Coxeter element 
of $E_n(-1)$, $C_n(x)$ is the product of cyclotomic polynomials
and $\varphi(x)$ is a Salem polynomial. 

\item The fixed point set $S^F$ consists of exactly $2$ points, say, $P$ and $Q$. 
Moreover, $F$ has an arithmetic Siegel disk at $Q$ but $F$ has no Siegel disk 
at $P$ (in fact the eigenvalues of $F^{*} \vert T_{S, P}^{*}$ 
are not multiplicatively independent). 

\item Let 
$$F^{*}(x, y) = (\alpha(n)x, \beta(n)y)$$ 
be the locally analytic linearization of $F$ at $Q$. So, $\alpha(n)$ 
and $\beta(n)$ are multiplicatively independent and of absolute value 
$1$. Then, there is a root $\delta(n)$ of $\varphi(x) =0$ of absolute 
value $1$ such that $\alpha(n)$ and $\beta(n)$ 
satisfy 
$$\alpha(n)\beta(n) = \delta(n)\, ,\, 2 + \frac{\alpha(n)}{\beta(n)} 
+ \frac{\beta(n)}{\alpha(n)} = \frac{\delta(n)(1+\delta(n))^2}
{(1 + \delta(n) + \delta(n)^{2})^{2}}\,.$$
In particular, $\alpha(n)^2$ and $\beta(n)^2$ are the roots of the quadratic 
equation of the form
$$x^2 + a(\delta(n))x + \delta(n)^2 = 0$$
where $a(x) \in \bQ(x)$. 

\item There are another root $\delta'(n)$ of $\varphi(x) =0$ of absolute 
value $1$ 
and complex numbers $\alpha'(n)$, $\beta'(n)$ such that
$$\vert \alpha'(n)/\beta'(n) \vert \not= 1\, ,$$
$$\alpha'(n)\beta'(n) = \delta'(n)\, ,\, 2 + \frac{\alpha'(n)}{\beta'(n)} 
+ \frac{\beta'(n)}{\alpha'(n)} = \frac{\delta'(n)(1+\delta'(n))^2}
{(1 + \delta'(n) + \delta'(n)^{2})^{2}}\,.$$
In particular, $\alpha'(n)^2$ and $\beta'(n)^2$ are the roots of the quadratic 
equation
$$x^2 + a(\delta'(n))x + \delta'(n)^{2} = 0\, .$$
\end{enumu}
\end{theorem}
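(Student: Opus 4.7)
The plan is to follow McMullen's strategy of realizing the abstract Coxeter element $w_n \in W(E_n(-1))$ as an honest automorphism of a blowup $S \to \bP^2$ and then reading off both the global cohomological data and the local analytic behavior at the two fixed points.

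First, for the existence in (i), I would use the standard Cremona-action dictionary: a marking of $S$ identifies the geometric basis of $H^2(S,\bZ)$ with $\langle e_0, e_1, \dots, e_n\rangle$, and conjugating $w_n$ by this marking yields a lattice isometry that is realized by an actual automorphism $F$ precisely when the $n$ blown-up points lie on a $w_n$-invariant orbit in some generalized sense (McMullen's realizability criterion via the period map for blowups of $\bP^2$). The hypothesis $n \equiv 1 \bmod 6$ (and $n$ large) is exactly what is needed for the Salem-factor eigenvalue to land in the image of the period map, so that an honest configuration of points exists. Once $F$ is constructed, the induced action on $H^2(S,\bZ)$ preserves $K_S$ (eigenvalue $1$, contributing the $(x-1)$ factor) and acts on $(-K_S)^{\perp} \cong E_n(-1)$ as $w_n$; combining with Theorem (\ref{cox}) gives the factorization in (i).

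For (ii), I would apply the holomorphic Lefschetz fixed point formula. Rationality of $S$ forces $H^{\mathrm{odd}}(S,\bZ) = 0$, so the total trace $\mathrm{tr}(F^*\mid H^*(S,\bZ))$ can be read off from (i); together with $\mathrm{tr}(F^*\mid H^0 \oplus H^4) = 2$, this pins down the trace on $H^2$ and hence the Euler characteristic of $S^F$. Checking via McMullen's explicit construction that the fixed locus is isolated and of length $2$ identifies the two fixed points $P, Q$. The local eigenvalues $(\alpha(n),\beta(n))$ at $Q$ and the analogous pair at $P$ are then obtained from McMullen's formula relating the cotangent eigenvalues at a fixed point of a surface automorphism with an invariant anti-canonical curve to a chosen root $\delta$ of $\varphi$. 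That formula is precisely the quadratic $x^2 + a(\delta)x + \delta^2 = 0$ asserted in (iii) and (iv) (with $a(x) \in \bQ(x)$ the universal rational function coming from the anti-canonical cubic and the Lefschetz-type computation in \cite{Mc07}, formula (9.1)).

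Finally, to separate (iii) from (iv), I would analyze which Galois conjugate $\delta = \delta(n)$ or $\delta' = \delta'(n)$ of the Salem number is associated to each fixed point. The discriminant of $x^2 + a(\delta)x + \delta^2$ determines whether $\alpha/\beta$ lies on $S^1$ or not: for one root of $\varphi$ the discriminant is negative, yielding $|\alpha(n)| = |\beta(n)| = 1$, and the multiplicative independence needed for an arithmetic Siegel disk at $Q$ follows from the irreducibility of $\varphi$ (any multiplicative relation would force an algebraic relation among conjugates of $\delta(n)$ forbidden by the Salem structure). For the other root $\delta'(n)$ the discriminant is positive, giving $|\alpha'/\beta'| \neq 1$, so no Siegel disk exists at $P$. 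The main obstacle is the first step, namely the realizability of the Coxeter element by an actual automorphism, which is the deep content of \cite{Mc07} and rests on his period-map argument; everything else is bookkeeping with Lefschetz-type formulas and standard Salem-number arithmetic.
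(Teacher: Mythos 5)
The statement you are asked about is not proved in this paper at all: it is quoted verbatim (with some re-packaging) from McMullen's work, with the explicit citation ``See \cite{Mc07}, Theorem (10.1), see also Theorem (10.3), proof of Theorem (10.4) and the formula (9.1).'' So there is no ``paper's own proof'' to compare against; the authors treat Theorem (\ref{Mc}) as a black box and the genuine content of the paper begins with Proposition (\ref{2.7}). Your proposal is therefore proving something the paper deliberately cites, which is not a mistake in itself, but worth recognizing.

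Evaluated on its own terms as a sketch of McMullen's argument, your outline hits the right landmarks (marking/period-map realizability of the Coxeter element, the $(x-1)$ factor from $K_S$, Lefschetz-type bookkeeping, the quadratic $x^2+a(\delta)x+\delta^2=0$ from the anticanonical cubic), but two points are off. First, you announce the holomorphic Lefschetz (Atiyah--Bott) formula but then compute a topological Lefschetz number to count fixed points; these are different tools, and McMullen in fact needs the holomorphic one to tie the cotangent eigenvalues at the fixed points to $\delta$. Second, and more seriously, the claim that multiplicative independence of $\alpha(n),\beta(n)$ ``follows from the irreducibility of $\varphi$'' is not an argument. Irreducibility alone cannot rule out a relation $\alpha^a\beta^b=1$; what one actually uses is the Galois-conjugate structure of the Salem polynomial, namely that there is a conjugate pair $(\alpha',\beta')$ with $|\alpha'/\beta'|\neq 1$ (part (4) of the very statement you are proving) together with the real conjugate $\eta>1$. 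Applying a Galois automorphism sending $\delta$ to $\delta'$ (resp.\ $\eta$) to the hypothesized relation and taking absolute values kills the exponents one after another. That is exactly the mechanism the present paper re-uses in the proof of Theorem (\ref{exist}); part (4) of Theorem (\ref{Mc}) exists precisely to feed that argument, so you should not present it as a mere afterthought about $P$ having no Siegel disk.
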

We call any pair $(S, F)$ as in Theorem (\ref{Mc}) a {\it McMullen's pair}. 
Notice that, by Theorem (\ref{Mc}) (1),  any McMullen's pair is of positive entropy.

\medskip

In \cite{Mc07}, it is not explicit that $\alpha(n)^{\pm 1}$ and $\beta(n)^{\pm 1}$
are algebraic integers. Since we will need this fact, we give here a proof.

\begin{proposition}\label{2.7}
Let $\alpha(n)$ and $\beta(n)$ be as in Theorem (\ref{Mc}).
Then $\alpha(n)$, $\beta(n)$, $\alpha(n)^{-1}$ and $\beta(n)^{-1}$ are algebraic integers.
\end{proposition}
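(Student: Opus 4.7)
The plan is to exploit the explicit relations in Theorem (\ref{Mc})(3) to write $\alpha(n)$ and $\beta(n)$ as roots of a monic quadratic whose coefficients are algebraic integers. Set $\alpha = \alpha(n)$, $\beta = \beta(n)$, $\delta = \delta(n)$. Since $\delta$ is a root of the Salem polynomial $\varphi(x) \in \bZ[x]$, the product $\alpha\beta = \delta$ is already in $\overline{\bZ}$. It therefore suffices to show that the sum $s := \alpha + \beta$ lies in $\overline{\bZ}$: then $\alpha$ and $\beta$ are roots of the monic polynomial $x^2 - sx + \delta$ with coefficients in $\overline{\bZ}$. The inverses will follow immediately, because $\delta^{-1}$ is a Galois conjugate of $\delta$ (the Salem polynomial $\varphi$ is reciprocal), hence $\alpha^{-1} = \beta/\delta$ and $\beta^{-1} = \alpha/\delta$ are in $\overline{\bZ}$.

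To handle $s$, I would combine the two equalities in Theorem (\ref{Mc})(3):
$$s^2 = \alpha\beta \left(2 + \frac{\alpha}{\beta} + \frac{\beta}{\alpha}\right) = \delta \cdot \frac{\delta(1+\delta)^2}{(1+\delta+\delta^2)^2} = \left(\frac{\delta(1+\delta)}{1+\delta+\delta^2}\right)^2,$$
so $s = \pm \delta(1+\delta)/(1+\delta+\delta^2)$. Since $\delta(1+\delta) \in \overline{\bZ}$, the problem collapses to showing that $1+\delta+\delta^2$ is a \emph{unit} in $\overline{\bZ}$, equivalently that $N_{\bQ(\delta)/\bQ}(1+\delta+\delta^2) = \pm 1$. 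By the standard resultant computation, this norm equals $\varphi(\omega)\varphi(\overline{\omega})$, where $\omega$ is a primitive cube root of unity.

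The main obstacle is to evaluate $\varphi(\omega)\varphi(\overline{\omega})$. I would do this by plugging $x = \omega$ into the identity (\ref{E_n(x)}): using $\omega^3 = 1$ together with the congruence $n \equiv 1 \pmod{6}$ (which forces $\omega^{n-2} = \omega^2$), a direct substitution gives $E_n(\omega)(\omega - 1) = \omega^2 - 1$, hence $E_n(\omega) = \omega + 1 = -\omega^2$; complex conjugation then yields $E_n(\overline{\omega}) = -\omega$. The factorization $E_n(x) = C_n(x)\varphi(x)$ from Theorem (\ref{cox}) now gives
$$C_n(\omega) C_n(\overline{\omega}) \cdot \varphi(\omega) \varphi(\overline{\omega}) = E_n(\omega) E_n(\overline{\omega}) = (-\omega^2)(-\omega) = 1.$$
Both factors on the left are \emph{nonnegative} integers, each being the norm $N_{\bQ(\omega)/\bQ}$ of an algebraic integer (namely $C_n(\omega)$, respectively $\varphi(\omega)$, which are algebraic integers because $C_n, \varphi \in \bZ[x]$). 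A product of two nonnegative integers equal to $1$ forces each factor to be $1$; in particular $\varphi(\omega)\varphi(\overline{\omega}) = 1$, so $1+\delta+\delta^2 \in \overline{\bZ}^{\times}$ and $s \in \overline{\bZ}$, completing the argument.
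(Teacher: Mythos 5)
Your proof is correct, and it takes a genuinely different route from the paper's at the decisive step. Both arguments start the same way: $\delta = \alpha\beta$ is an algebraic integer, and the displayed relation from Theorem (\ref{Mc})(3) gives $\alpha + \beta = \pm\,\delta(1+\delta)/(1+\delta+\delta^2)$, so everything reduces to showing that $1/(1+\delta+\delta^2)$ is an algebraic integer. The paper then proves this by an explicit sequence of polynomial divisions: using $n \equiv 1 \pmod 6$ and identity (\ref{E_n(x)}) it writes $E_n(x)(x-1) = (x^2+x+1)A(x) - (x+2)$ with $A \in \bZ[x]$, derives $1/(\delta^2+\delta+1) = A(\delta)/(\delta+2)$, and then eliminates the denominator $\delta + 2$ by another division, exhibiting the inverse literally as an element of $\bZ[\delta]$. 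You instead show that $1+\delta+\delta^2$ is a \emph{unit} by computing the norm $N_{\bQ(\delta)/\bQ}(1+\delta+\delta^2) = \varphi(\omega)\varphi(\overline\omega)$ and deducing that it is $1$ from the factorization $E_n = C_n\varphi$, the evaluation $E_n(\omega)E_n(\overline\omega) = 1$ (which again uses (\ref{E_n(x)}) and $n \equiv 1 \pmod 6$), and the fact that both $C_n(\omega)C_n(\overline\omega)$ and $\varphi(\omega)\varphi(\overline\omega)$ are nonnegative rational integers. Both approaches thus rest on the same identity and the same congruence, but yours is more conceptual (norms and units) whereas the paper's is more constructive (an explicit integral expression). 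You also handle the inverses more cleanly: writing $\alpha^{-1} = \beta\delta^{-1}$ and $\beta^{-1} = \alpha\delta^{-1}$, with $\delta^{-1}$ an algebraic integer since $\varphi$ is reciprocal, avoids the paper's appeal to ``replace $F$ by $F^{-1}$ in Theorem (\ref{Mc}).''
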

\begin{proof}
Set $\alpha:=\alpha(n),\, \beta:=\beta(n)$ and $\delta:=\delta(n)$.
We first prove the Proposition for $\alpha$ and $\beta$.
 
From the previous Theorem \eqref{Mc} (3), we know that $\delta=\alpha \beta$ is an algebraic integer, 
therefore it is enough to show that $\alpha+\beta$ is so. 
From the equation: 
$$
\frac{(\alpha +\beta)^2}{\alpha\beta}=
\frac{\delta(1+\delta)^2}{(1+\delta+\delta^2)^2}\, ,
$$
we have that 
$$
\alpha+\beta=\pm \frac{\delta(1+\delta)}{1+\delta+\delta^2}\, ,
$$
hence we only need to prove that $\frac{1}{1+\delta+\delta^2}$ is an algebraic integer.
We use now formula \eqref{E_n(x)} for the characteristic polynomial $E_n(x)$ of the 
Coxeter element $w_n\,$. Since $n=6k+1$, it readily follows from \eqref{E_n(x)} that
there exists $A(x)\in \mathbb{Z}[x]$ such that
\begin{equation}\label{EQ}
E_n(x)(x-1)=(x^2+x+1)A(x) - (x+2)\, .
\end{equation}
Since $E_n(\delta)=0\, $, we have:
\begin{equation}\label{Qd}
\frac{A(\delta)}{\delta+2}=\frac{1}{\delta^2+\delta+1}\, .
\end{equation}
On the other hand, we can write 
$$
\frac{A(\delta)}{\delta+2}=B(\delta)+\frac{A(-2)}{\delta+2}\, , \qquad \mbox{for some}\quad B(x)\in \mathbb{Z}[x]\, .
$$ 
Hence, it is enough to prove that $\frac{A(-2)}{\delta+2}$ is an algebraic integer.
From \eqref{EQ} it follows that $E_n(-2)=-A(-2)$, therefore, there exists $C(x)\in \mathbb{Z}[x]$
such that 
$$
E_n(x)=(x+2)C(x)-A(-2)\, .
$$
We conclude that 
$$
\frac{A(-2)}{\delta+2}=C(\delta)\, ,
$$
which is an algebraic integer, thus $\alpha +\beta$ is an algebraic integer.

The statement for $\alpha^{-1}$ and $\beta^{-1}$ follows by replacing $F$ with $F^{-1}$
in Theorem \eqref{Mc}.
\end{proof}

\section{Statement of the main results}
\setcounter{lemma}{0}
\noindent
In this section we state our main results more explicitly. 
\begin{notation}\label{nottion}
In the following, we denote by $Y_{\Delta}$ the $d$-dimensional complete toric variety 
$T_{\rm emb}(\Delta)$ defined by a complete fan $\Delta$ in $N \simeq \bZ^d$. We denote by $f_a$ the automorphism of $Y_{\Delta}$ associated to an element
$$a:=(a_1,a_2,\dots,a_d)\in (\bC^*)^d$$ 
under the canonical inclusion $(\bC^*)^d\subset {\rm Aut}\,(Y_\Delta)$. All what we need for toric varieties is covered by the book \cite{Oda}.
\end{notation} 

\begin{theorem}\label{main1} Let $(S, F)$ be a McMullen's pair defined 
in Section 2 and $Y = Y_{\Delta}$ be a $d$-dimensional non-singular projective toric variety. Set $N$ to be the number of the $d$-dimensional cones in $\Delta$. 
Then, 
there is 
$$a = (a_1,a_2,\dots,a_d) \in (\bC^*)^d$$ 
such that 
$$g := (F, f_a) \in\, {\rm Aut}\, (S \times Y)$$ 
satisfies:

(1) $h(g) = h(F) >0$; and 

(2) $g$ has exactly $N$ Siegel disks and they are all arithmetic.
\end{theorem}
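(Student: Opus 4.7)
The plan is to reduce Theorem~(\ref{main1}) to a single arithmetic existence statement about $a$, by analyzing the fixed-point structure of $g$ and observing that $f_a$ acts trivially on cohomology.

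For part (1), since $a\in(\bC^*)^d$ lies in the connected algebraic torus acting on $Y_\Delta$, the pullback $f_a^*$ is the identity on $H^*(Y_\Delta,\bZ)$. By the K\"unneth formula and the vanishing of odd cohomology on both a rational surface and a smooth complete toric variety, one has $g^* = F^* \otimes {\rm id}$ on $\oplus_{k} H^{2k}(S\times Y,\bZ)$, so its spectral radius equals that of $F^*$ on $H^*(S,\bZ)$. Theorem~(\ref{gyf}) then gives $h(g) = h(F)$, which is positive by Theorem~(\ref{Mc})(1), independently of the choice of $a$.

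For part (2), I first describe ${\rm Fix}(g) = {\rm Fix}(F)\times {\rm Fix}(f_a)$, where ${\rm Fix}(F) = \{P,Q\}$ by Theorem~(\ref{Mc})(2). Whenever $a_1,\ldots,a_d$ are multiplicatively independent, the cyclic group $\langle a\rangle$ is Zariski dense in the torus, so ${\rm Fix}(f_a)$ coincides with the set of $N$ torus-fixed points $q_1,\ldots,q_N$, one per $d$-dimensional cone $\sigma_i\in\Delta$. At a fixed point $(*,q_i)$ the eigenvalues of $dg$ split as the two eigenvalues of $dF$ at $*\in S$ together with the $d$ characters $\chi^{u_{i,1}}(a),\ldots,\chi^{u_{i,d}}(a)$, where $\{u_{i,j}\}_{j=1}^d$ is the $\bZ$-basis of $M$ dual to the primitive generators of $\sigma_i$ (such a basis exists because $Y$ is smooth). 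At every $(P,q_i)$ the two eigenvalues from $F$ at $P$ are already multiplicatively dependent by Theorem~(\ref{Mc})(2), so no Siegel disk can live there for any choice of $a$. Hence it suffices to arrange that each $(Q,q_i)$ is the center of an arithmetic Siegel disk; this will automatically produce exactly $N$ of them.

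Concretely, I will choose $a$ subject to: (a) $|a_k|=1$ for all $k$; (b) $a_k$ and $a_k^{-1}$ are algebraic integers for all $k$; and (c) for each $i=1,\ldots,N$ the collection
$$\{\alpha(n),\,\beta(n),\,\chi^{u_{i,1}}(a),\,\ldots,\,\chi^{u_{i,d}}(a)\}$$
is multiplicatively independent. Given such an $a$, Theorem~(\ref{Mc})(3) provides a local analytic linearization of $F$ on a bidisk around $Q$, while $f_a$ is already linear in toric coordinates at $q_i$; the product of these linearizations realizes $g$ as an irrational rotation on a polydisk around $(Q,q_i)$ whose $2+d$ eigenvalues are all algebraic integers (using Proposition~(\ref{2.7}) together with (b)). This yields an arithmetic Siegel disk at each $(Q,q_i)$.

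The main obstacle is the simultaneous arithmetic condition (c): a single $a$ on the compact torus must avoid $N$ distinct families of multiplicative relations at once, while each $a_k$ is an algebraic integer whose inverse is also an algebraic integer. This is precisely the condition formalized in Definition~(\ref{mau}), and its solvability is the content of Theorem~(\ref{exist}) in Section~4. I expect the proof of Theorem~(\ref{exist}) to build the $a_k$ from carefully chosen Salem numbers and to rule out the finitely many offending relations by a Galois-theoretic argument on the Salem conjugates of $\alpha(n),\beta(n)$ and of the $a_k$. Invoking Theorem~(\ref{exist}) then completes the proof of Theorem~(\ref{main1}).
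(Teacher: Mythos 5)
Your proof follows essentially the same route as the paper's: part (1) is reduced to the fact that $f_a^*$ acts trivially on $H^*(Y,\bZ)$ together with K\"unneth and Theorem~(\ref{gyf}), and part (2) is reduced to the MAU condition of Definition~(\ref{mau}) whose solvability is Theorem~(\ref{exist}). Two minor points of comparison: for the entropy, you invoke $f_a\in{\rm Aut}^0(Y)$ directly, whereas the paper proves the slightly more general Proposition~(\ref{proposition 5.1}) (that \emph{every} automorphism of a projective toric variety has entropy $0$ via the finiteness and rationality of the ample cone) and then remarks that your simpler observation suffices; for the fixed-point set of $f_a$, your Zariski-density argument is a conceptual substitute for the paper's explicit coordinate computation on each chart $U_p\cong\AN^d$, and both are correct. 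The one step you should spell out more carefully is the passage from your condition (c), which is $N$ separate multiplicative-independence requirements indexed by the torus-fixed points $q_i$, to the single requirement that $(b_1,b_2,a_1,\dots,a_d)$ be a MAU. The bridge is exactly the fact you cite, that each $\{u_{i,j}\}_{j=1}^d$ is a $\bZ$-basis of $M$ by smoothness and completeness of $Y$: consequently the characters $\chi^{u_{i,1}}(a),\dots,\chi^{u_{i,d}}(a)$ and the coordinates $a_1,\dots,a_d$ generate the same multiplicative subgroup of $\bC^*$ (and moreover are simultaneously algebraic integers on $S^1$, since the $\bZ$-basis change matrix and its inverse have integer entries), so the $N$ conditions collapse to one. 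The paper makes this reduction explicit via the unimodular matrix $(k_{ij}(p))$.
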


Note that toric varieties are always rational. We also note that 
for $d \ge 2$, we can make $N$ as large as we want, while 
for $d=1$, we have $Y = \bP^1$ and $N=2$. So, Theorem (\ref{main}) (1), (2) follows from this theorem. We shall prove Theorem (\ref{main1}) in Section 5.

\begin{theorem}\label{main2} For each given $d$, there are 
$d$ McMullen's pairs
$$(S_1, F_1)\, ,\, (S_2, F_2)\, ,\, \dots\, ,\, 
(S_d, F_d)$$ 
such that 
$$g := (F_1, F_2, \dots , F_d) \in {\rm Aut}\, 
(S_1 \times S_2 \times \cdots \times S_d)$$
satisfies:

(1) $h(g) = h(F_1) + h(F_2) + \cdots + h(F_d) >0$; and 

(2) $g$ has exactly one Siegel disk and it is arithmetic.
\end{theorem}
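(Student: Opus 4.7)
The plan is to identify the unique fixed point of $g$ at which a Siegel disk could possibly sit, and then invoke the arithmetic existence result of Section 4 to guarantee multiplicative independence of the eigenvalues there. By Theorem \ref{Mc}(2), each $F_i$ has exactly two fixed points $P_i, Q_i$, so the fixed locus of $g$ on $S_1 \times \cdots \times S_d$ is the set of $2^d$ product points $(R_1, \ldots, R_d)$ with $R_i \in \{P_i, Q_i\}$. At such a point the cotangent action splits as $\bigoplus_i F_i^{*}\vert T^{*}_{S_i, R_i}$, so its eigenvalues are the disjoint union of the factor eigenvalues.

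First, any fixed point having some coordinate equal to $P_i$ must be ruled out. By Theorem \ref{Mc}(4), the eigenvalues of $F_i^{*}$ at $P_i$ are $\alpha'(n_i), \beta'(n_i)$ with $\alpha'(n_i)\beta'(n_i) = \delta'(n_i)$ of unit modulus and $|\alpha'(n_i)/\beta'(n_i)| \ne 1$, so exactly one of $|\alpha'(n_i)|, |\beta'(n_i)|$ is strictly greater than $1$. Hence the eigenvalues of $g^{*}$ at any such fixed point do not all lie on $S^{1}$, precluding a Siegel disk. The only surviving candidate is $Q := (Q_1, \ldots, Q_d)$, where the $2d$ eigenvalues are $\alpha(n_i), \beta(n_i)$ for $1\le i\le d$, all of absolute value $1$ by Theorem \ref{Mc}(3) and all algebraic integers by Proposition \ref{2.7}.

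The crux is to arrange that the full collection $\{\alpha(n_1), \beta(n_1), \ldots, \alpha(n_d), \beta(n_d)\}$ be multiplicatively independent. Each individual pair $\alpha(n_i), \beta(n_i)$ is multiplicatively independent by Theorem \ref{Mc}(3), but one must additionally prevent multiplicative relations across distinct pairs. The plan is to invoke Theorem \ref{exist} of Section 4, which supplies integers $n_1, \ldots, n_d$ (sufficiently large and $\equiv 1 \pmod 6$) so that the combined eigenvalue packet enjoys this global multiplicative independence; this is the arithmetic heart of the argument and the main obstacle. Granted such a choice, $g^{*}\vert T^{*}_{S_1 \times \cdots \times S_d, Q}$ is an irrational rotation in the sense of Definition \ref{sd}, and since all eigenvalues are algebraic integers on $S^{1}$, the standard Diophantine lower bounds for algebraic numbers verify Siegel's linearization hypothesis and yield an analytic linearization at $Q$, producing the unique arithmetic Siegel disk of $g$.

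Finally, the entropy formula is obtained from Theorem \ref{gyf}. By the K\"unneth formula, the action of $g^{*}$ on $\bigoplus_k H^{2k}(S_1 \times \cdots \times S_d, \bZ)$ is the tensor product of the actions $F_i^{*}$ on $\bigoplus_k H^{2k}(S_i, \bZ)$, so its spectral radius factors as the product of the individual spectral radii; taking logarithms gives $h(g) = \sum_{i=1}^{d} h(F_i)$. Each $h(F_i) > 0$ because by Theorem \ref{Mc}(1) the characteristic polynomial of $F_i^{*}$ on $H^{2}(S_i, \bZ)$ contains a Salem polynomial factor, whose Salem number exceeds $1$.
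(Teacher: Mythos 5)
Your proof follows the paper's structure (enumerate the $2^d$ fixed points, show only $(Q_1,\dots,Q_d)$ can carry a Siegel disk, invoke the MAU construction, and compute entropy via K\"unneth), and the entropy part is fine. But the step excluding the other $2^d-1$ fixed points contains a genuine gap. You assert that the eigenvalues of $F_i^*$ at $P_i$ are $\alpha'(n_i),\beta'(n_i)$ and that one of them therefore lies off the unit circle. Theorem \ref{Mc}(4), however, never says that $\alpha',\beta'$ are the cotangent eigenvalues at $P$; they are introduced there purely as solutions of the same quadratic system as $\alpha,\beta$ but attached to a Galois-conjugate root $\delta'$, and they serve only in the Galois step of the proof of Theorem \ref{exist}. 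The fact you actually need is Theorem \ref{Mc}(2): the eigenvalues of $F_i^*\vert T^*_{S_i,P_i}$ are \emph{not} multiplicatively independent. That already suffices, because any nontrivial multiplicative relation between the two eigenvalues at $P_i$ persists as a nontrivial relation among the $2d$ eigenvalues at any product fixed point whose $i$-th coordinate is $P_i$, so the irrational-rotation condition of Definition \ref{sd} fails there; no claim about absolute values is needed or justified.

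A secondary issue: the appeal to Siegel's linearization theorem at $(Q_1,\dots,Q_d)$ is unnecessary. Each $F_i$ is already analytically linearized near $Q_i$ (that is what having a Siegel disk at $Q_i$ means), so the product of these charts analytically linearizes $g$ at $(Q_1,\dots,Q_d)$ with no further input. The only thing left to check is the multiplicative independence of the combined $2d$ eigenvalues $\alpha_1,\beta_1,\dots,\alpha_d,\beta_d$, and for that one should cite Remark \ref{explicit} (the explicit MAU built out of a \emph{sequence of McMullen's pairs}), which is the operative form of Theorem \ref{exist} required here, rather than the bare extension statement.
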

Theorem (\ref{main}) (3) clearly follows from this theorem. We shall prove 
Theorem (\ref{main2}) at the end of Section 5.

\section{Salem polynomials and multiplicatively independent sequences}
\setcounter{lemma}{0}
\noindent
In this section, we introduce the notion of ``multiplicatively independent sequence of algebraic integers on the unit circle (MAU)'' and show the existence. 
The existence of a MAU is crucial in our product construction. 
\begin{definition}\label{mau}
Let
$$\alpha_1\, ,\, \beta_1\, ,\, \dots\, ,\, \alpha_m\, ,\, \beta_m$$
be a sequence of complex numbers of length $2m$. We call this sequence a
``multiplicatively independent sequence of algebraic integers on the unit circle 
of length $2m$'' (MAU of length $2m$ for short) if the following 
(i), (ii) and (iii) are satisfied:
\begin{enumi}
\item $\alpha_i$, $\beta_i$ ($1 \le i \le m$) are algebraic integers;

\item $\alpha_i$, $\beta_i$ ($1 \le i \le m$) are of absolute value 
$1$, i.e., they are on the unit circle; and

\item $(\alpha_1, \beta_1, \dots, \alpha_m, \beta_m)$ is 
multiplicatively independent. 
\end{enumi}
By abuse of language, we call the following subsequence of a MAU 
of length $2m$,
$$\alpha_1\, ,\, \beta_1\, ,\, \dots\, ,\, \alpha_{m-1}\, ,\, \beta_{m-1}\, 
,\, \alpha_{m}$$
a MAU of length $2m-1$. 
\end{definition}

\begin{theorem}\label{exist}
Any MAU of length $2m$ can be extended to a MAU of length $2(m+1)$. 
In particular, there is an infinite sequence
\begin{equation}\label{*}
\alpha_1\, ,\, \beta_1\, ,\, \dots\, ,\, \alpha_m\, ,\, 
\beta_m\, ,\,\alpha_{m+1}\, ,\, 
\beta_{m+1}\, ,\, \dots
\end{equation}
such that for any given integer $n >0$, the first $n$ terms of this sequence 
form a MAU of length $n$.
\end{theorem}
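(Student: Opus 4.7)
The second assertion (existence of an infinite sequence) follows from the first (extension from length $2m$ to length $2(m+1)$) by induction, so my plan concentrates on the extension. The key idea is to append a McMullen pair $(\alpha(n),\beta(n))$ for a carefully chosen parameter $n \equiv 1 \pmod 6$, and to rule out any new multiplicative relation by a Galois-theoretic argument that exploits Theorem (\ref{Mc})(4).

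Let $K$ be the Galois closure over $\bQ$ of $\bQ(\alpha_1,\beta_1,\dots,\alpha_m,\beta_m)$ and set $D := [K:\bQ]$. By Theorem (\ref{cox}) and formula \eqref{E_n(x)} one has $\deg E_n(x) = n$, and the cyclotomic factor $C_n(x)$ depends only on $n \bmod 360$; hence, inside each residue class mod $360$ compatible with $n \equiv 1 \pmod 6$, the degrees $\deg \varphi_n = n - \deg C_n$ run through an arithmetic progression of common difference $360$. I therefore choose $n$ so that $\deg \varphi_n$ is coprime to $D$ (for instance a prime exceeding $D$). Since $K$ is Galois over $\bQ$, this coprimality forces $\bQ(\delta(n)) \cap K = \bQ$ and the irreducibility of $\varphi_n$ over $K$. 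Now set $\alpha_{m+1} := \alpha(n)$, $\beta_{m+1} := \beta(n)$; by Theorem (\ref{Mc}) and Proposition (\ref{2.7}) these are algebraic integers on the unit circle, mutually multiplicatively independent.

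To verify the extended sequence is a MAU, suppose $\prod_{i=1}^m \alpha_i^{a_i}\beta_i^{b_i}\cdot \alpha_{m+1}^a\beta_{m+1}^b = 1$. If $(a,b)=(0,0)$ the MAU hypothesis on the first $2m$ terms kills every exponent. So assume $(a,b)\neq(0,0)$ and set $u := \alpha_{m+1}^a\beta_{m+1}^b \in K$. By the irreducibility of $\varphi_n$ over $K$, I can pick $\sigma \in \mathrm{Gal}(\overline{\bQ}/\bQ)$ that fixes $K$ pointwise and sends $\delta(n)$ to the unit-circle root $\delta'(n)$ of Theorem (\ref{Mc})(4). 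Then $\sigma(\alpha_{m+1})\sigma(\beta_{m+1}) = \delta'(n)$, while $\sigma(\alpha_{m+1})^2$ and $\sigma(\beta_{m+1})^2$ are the two roots of $x^2 + a(\delta'(n))x + \delta'(n)^2 = 0$, namely $\alpha'(n)^2$ and $\beta'(n)^2$. Hence $\{|\sigma(\alpha_{m+1})|,\,|\sigma(\beta_{m+1})|\} = \{|\alpha'(n)|,\,|\beta'(n)|\}$, a pair of mutually reciprocal positive reals with neither equal to $1$, by Theorem (\ref{Mc})(4). Applying $\sigma$ to $u$ and taking absolute values gives $|\sigma(\alpha_{m+1})|^a |\sigma(\beta_{m+1})|^b = |u| = 1$, and writing this as $r^{a-b}=1$ with $r=|\alpha'(n)|\neq 1$ forces $a=b$. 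The relation collapses to $\delta(n)^a = u \in K \cap \bQ(\delta(n)) = \bQ$. But $\delta(n)^a$ is an algebraic integer of absolute value $1$ lying in $\bQ$, so $\delta(n)^a = \pm 1$, forcing $\delta(n)$ to be a root of unity, contrary to it being Galois-conjugate to a Salem number. Therefore $a=b=0$, contradiction.

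The main obstacle is the arithmetic step of ensuring $\bQ(\delta(n)) \cap K = \bQ$; everything else is a transparent descent through the quadratic relations in Theorem (\ref{Mc})(3) and (4). Fortunately, Theorem (\ref{cox}) and formula \eqref{E_n(x)} provide just enough control over $\deg \varphi_n$ for a coprimality-of-degrees argument to deliver the required disjointness over a Galois base field.
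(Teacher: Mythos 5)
Your approach is essentially the same Galois-theoretic strategy as the paper's: append a McMullen pair whose Salem data is arithmetically ``disjoint'' from the previously built field $K$, transport $\delta$ to $\delta'$ by a $K$-automorphism, and use $|\alpha'/\beta'|\neq 1$ to force $a=b$. However, there is a genuine gap in the arithmetic setup and in the last step.

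The degree $\deg\varphi_n$ of a Salem polynomial is always even, so it can never be an odd prime; and more importantly, since the field $K$ is generated by unit-circle algebraic integers coming from earlier Salem polynomials, $D=[K:\bQ]$ will typically be even as well. Hence the coprimality $\gcd(\deg\varphi_n,D)=1$ you are relying on is unattainable, and neither the irreducibility of $\varphi_n$ over $K$ nor the claim $K\cap\bQ(\delta(n))=\bQ$ is established. The paper sidesteps this by working instead with the Salem \emph{trace} polynomial $r_n(x)$, whose degree is $\tfrac12\deg\varphi_n = 180k+7$; since $\gcd(180,7)=1$, Dirichlet's theorem provides infinitely many $k$ with $180k+7$ prime, and choosing such a prime $q>D$ gives $[K(\delta+\tfrac1\delta):K]=q$, i.e.\ $r_n$ irreducible over $K$. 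This suffices to produce a $K$-automorphism $\sigma$ sending $\delta+\tfrac1\delta\mapsto\delta'+\tfrac1{\delta'}$, hence $\sigma(\delta)\in\{\delta',\overline{\delta'}\}$, and the step $a=b$ goes through. But it does \emph{not} give $K\cap\bQ(\delta)=\bQ$, since $[\bQ(\delta):\bQ]=2q$ is even and $K\cap\bQ(\delta)$ could a priori be a quadratic field. So your final inference ``$\delta^a\in K\cap\bQ(\delta)=\bQ$, hence $\delta^a=\pm1$'' is not justified. The paper instead uses a second $K$-automorphism $\tau$ with $\tau(\delta+\tfrac1\delta)=\eta+\tfrac1\eta$ (again available because $r_n$ is irreducible over $K$), so $\tau(\delta)\in\{\eta,\tfrac1\eta\}$; applying $\tau$ to $\delta^a\in K$ and comparing absolute values yields $\eta^{\pm a}=1$, forcing $a=0$. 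Replacing your $K\cap\bQ(\delta)=\bQ$ step with this second-automorphism argument, and replacing ``$\deg\varphi_n$ prime'' with ``$\deg r_n$ prime via Dirichlet,'' would make the proof correct and align it with the paper's.
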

\begin{remark}\label{explicit} 
In the proof, we shall give an explicit construction of the 
sequence \eqref{*}. This explicit construction is essential in our proof 
of Theorem (\ref{main2}). In fact, in our construction of the sequence 
\eqref{*}, there is a McMullen's pair $(S_m, F_m)$ for each $m$ such that $(S_m)^{F_m} = \{P_m, Q_m\}$ and $F_m$ has an arithmetic Siegel disk at $Q_m$ with
$$F_m^{*}(x, y) = (\alpha_m x, \beta_m y)$$
for appropriate local coordinates $(x, y)$ at $Q_m$ (but no Siegel disk 
at $P_m$). 
\end{remark}
\begin{proof} We construct $\alpha_m, \beta_m$ inductively. 

For each positive integer $k$, we set
$$n(k): = 360k +19\, ,\, d(k) : = 180k +7\, .$$
Then $n(k) \equiv 1\, {\rm mod}\, 6$. Note that $180$ and $7$ are coprime. 
Then, by Dirichlet's Theorem (see e.g. \cite{Se73}, Page 25, Lemma 3),
there are infinitely many prime numbers in the sequence 
$$d(1)\, ,\, d(2)\, ,\, \dots \, ,\, d(k)\, ,\, \dots\, .$$

First we construct a MAU $\alpha_1, \beta_1$ of length $2$. Choose a sufficiently large prime number $p_1 = d(k_1)$. Set $n_1 := n(k_1)$. As $n_1 \equiv 1\, {\rm mod}\, 6$ 
and $n_1$ is also sufficiently large, we can apply Theorem (\ref{Mc}) for 
this $n_1$. Hence we obtain a McMullen's pair 
$$(S_1, F_1) := (S(n_1), F(n_1))$$ 
with a Siegel disk at $Q_1$ such that
$$(F_1)^{*}(x, y) = (\alpha(n_1) x, \beta(n_1) y)\, $$ 
at $Q_1\,$. This is arithmetic by Proposition \eqref{2.7}.
 {\it Here and hereafter, to describe McMullen's pairs, 
we adopt the same notation as in Theorem (\ref{Mc}).} Set
$$\alpha_1 := \alpha(n_1)\, ,\, \beta_1 := \beta(n_1)\, .$$
Then, $\alpha_1$ and $\beta_1$ form a MAU of length $2$. 

Next, assuming that we have constructed a MAU of length $2m$ 
$$\alpha_1\, ,\, \beta_1\, ,\, \dots\, ,\, \alpha_m\, ,\, 
\beta_m\, ,$$
we shall extend this sequence to a MAU of length $2(m+1)$. 

Let us consider the field extension
$$K := \bQ(\alpha_1\, ,\, \beta_1\, ,\, \dots\, ,\, \alpha_m\, ,\, 
\beta_m)\, .$$ 
We put $\ell := [K :\bQ]$. 
Then, choose sufficiently large $k$ such that $q := d(k)$ is a prime number 
with $q > \ell$. Set $n := n(k)$. 
As $n \equiv 1\, {\rm mod}\, 6$, we can apply Theorem (\ref{Mc}) 
for this $n$. Then, we obtain a McMullen's pair 
$(S(n),F(n))$ 
with important values $\delta(n)$, $\alpha(n)$, $\beta(n)$, $\delta'(n)$, 
$\alpha'(n)$, $\beta'(n)$ and the Salem polynomial $\varphi (x)$ as described in Theorem (\ref{Mc}). 
We set: 
$$\delta := \delta(n)\, ,\, \delta' := \delta'(n)\, ,\, \alpha := 
\alpha(n)\, ,\, \beta := \beta(n)\, ,\, \alpha' := 
\alpha'(n)\, ,\, \beta' := \beta'(n)\, .$$

We shall show that 
\begin{equation}\label{**}
\alpha_1\, ,\, \beta_1\, ,\, \dots\, ,\, \alpha_m\, ,\, 
\beta_m\, ,\, \alpha\, ,\, \beta
\end{equation}
is a MAU of length $2(m+1)$. {\it We then define} $\alpha_{m+1} := \alpha$ 
and $\beta_{m+1} := \beta$, {\it and continue}. 

By the assumption (on the first $2m$ terms) 
and by Theorem (\ref{Mc})(3) and Proposition (\ref{2.7}), we already know that each term of \eqref{**} 
is an algebraic integer of absolute value $1$. Thus, it suffices to show 
that they are multiplicatively independent. We shall prove this from now.

First we compute the degree of the Salem polynomial $\varphi(x)$ and its Salem trace polynomial 
$r(x)$. By  
$$n = 360k +19 \equiv 19\, {\rm mod}\, 360\, ,$$
we have $C_{n}(x) = C_{19}(x)$ in Theorem (\ref{Mc})(1) (cf. 
Theorem (\ref{cox})). 
On the other hand, by Example (\ref{19}), we have 
${\rm deg}\, C_{19}(x) = 5$. Thus
$${\rm deg}\, \varphi(x) = 360k+19 -5 = 360k +14\, .$$ Hence 
$${\rm deg}\, r(x) = \frac{{\rm deg}\, \varphi(x)}{2} 
= 180k +7 = d(k) = q\, .$$ 

As $r(x)$ is irreducible over $\bZ$, it follows that 
$$[\bQ(\delta + \frac{1}{\delta}) : \bQ] = q\, .$$
As $q > \ell$ and $q$ is a prime number, 
we have then that
$$[K(\delta + \frac{1}{\delta}) : K] = q\, .$$
So, $r(x)$ is also irreducible over $K$. Let $L$ be the Galois closure 
of $K(\delta + \frac{1}{\delta})$ in the algebraic closure $\overline{K}$. 
As 
$$\delta + \frac{1}{\delta}\, ,\, 
\delta' + \frac{1}{\delta'}\, ,\, \eta + \frac{1}{\eta}$$
are roots of $r(x) = 0$, there are $\sigma \in {\rm Gal}(L/K)$ 
and $\tau \in {\rm Gal}(L/K)$ such that 
$$\sigma(\delta + \frac{1}{\delta}) = \delta' + \frac{1}{\delta'}\, ,\, 
\tau(\delta + \frac{1}{\delta}) = \eta + \frac{1}{\eta}\, .$$
Here $\eta$ is the Salem number of $\varphi(x)$.
Extending $\sigma$ and $\tau$ to ${\rm Gal}\, (\overline{K}/K)$, we have 
$$\sigma(\delta) = \delta'\,\, {\rm or}\,\, 
 \sigma(\delta) = \frac{1}{\delta'} 
= \overline{\delta'}\, ,$$
$$\tau(\delta) = \eta\,\, {\rm or}\,\, \tau(\delta) = \frac{1}{\eta}\, .$$
Let 
\begin{equation}\label{***}
\alpha_1^{\ell_1}\beta_1^{k_1} \cdots \alpha_m^{\ell_m}\beta_m^{k_m} 
\alpha^{\ell_{m+1}}\beta^{k_{m+1}} = 1
\end{equation}
where $\ell_i$, $m_i$ are integers. Transforming \eqref{***} by $\sigma$ 
(and switching $\alpha'$ and $\beta'$ if necessary), 
we obtain either 
$$\alpha_1^{\ell_1}\beta_1^{k_1} \cdots \alpha_m^{\ell_m}\beta_m^{k_m} 
(\alpha')^{\ell_{m+1}}(\beta')^{k_{m+1}} = \pm 1\,\, {\rm or}$$
$$\alpha_1^{\ell_1}\beta_1^{k_1} \cdots \alpha_m^{\ell_m}\beta_m^{k_m} 
(\overline{\alpha'})^{\ell_{m+1}}(\overline{\beta '})^{k_{m+1}} = \pm 1\,\, .$$
Here we use Theorem (\ref{Mc})(3) and (4). 
In the first case, taking (the square of) the norm, we get 
$$1 = \vert (\alpha')^{2\ell_{m+1}}(\beta')^{2k_{m+1}} \vert 
= \vert (\alpha'/\beta')^{\ell_{m+1} - k_{m+1}}(\alpha'\beta')^{\ell_{m+1} 
+ k_{m+1}} \vert = \vert \alpha'/\beta' \vert^{\ell_{m+1} - k_{m+1}}\, .$$
Here we use $\vert \alpha' \beta'\vert = \vert \delta' \vert = 1$ (Theorem 
(\ref{Mc})(4)). As $\vert \alpha'/\beta' \vert \not= 1$ 
(Theorem (\ref{Mc})(4)), 
it follows that 
$$\ell_{m+1} - k_{m+1} = 0\, .$$
For the same reason, this is true also for the second case. 
Substituting this into \eqref{***}, and using $\alpha \beta = \delta$, we obtain
$$\alpha_1^{\ell_1}\beta_1^{k_1} \cdots \alpha_m^{\ell_m}\beta_m^{k_m} 
\delta^{k_{m+1}} = 1\, .$$
Transforming this equality by $\tau$, we get either 
$$\alpha_1^{\ell_1}\beta_1^{k_1} \cdots \alpha_m^{\ell_m}\beta_m^{k_m} 
\eta^{k_{m+1}} = 1\,\, \, {\rm or}\,\, \, 
\alpha_1^{\ell_1}\beta_1^{k_1} \cdots \alpha_m^{\ell_m}\beta_m^{k_m} 
(\frac{1}{\eta})^{k_{m+1}} = 1\, .$$ 
Taking the norm, we get
$$\eta^{k_{m+1}} = 1\, .$$
As $\eta >1$, this implies $k_{m+1} = 0$. Thus 
$$\ell_{m+1} = k_{m+1} = 0\, .$$
Substituting this into \eqref{***}, we obtain
$$\alpha_1^{\ell_1}\beta_1^{k_1} \cdots \alpha_m^{\ell_m}\beta_m^{k_m} 
 = 1\,\, .$$
As $\alpha_1, \beta_1, \cdots , \alpha_{m}, \beta_{m}$ is a MAU of length $2m$, 
it follows that 
$$\ell_1 = k_1 = \cdots = \ell_m = k_m = 0\, .$$
This completes the proof. 
\end{proof}

\section{Proof of main Theorems}
\setcounter{lemma}{0}
\noindent
Theorem (\ref{main1})(1) follows from the next two Propositions 
(\ref{proposition 5.1}), (\ref{proposition 5.2}):

\begin{proposition}\label{proposition 5.1}
Let $Y$ be a non-singular projective toric variety. Then, for any
$f \in {\rm Aut}\,(Y)$ (not necessarily in the big torus), we have $h(f)=0$.
\end{proposition}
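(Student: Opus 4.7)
The plan is to apply the Gromov--Yomdin--Friedland theorem (Theorem \ref{gyf}) to reduce the problem to showing that the spectral radius of $f^{*}$ on $\bigoplus_{k}H^{2k}(Y,\bZ)$ equals $1$. For a smooth complete toric variety, the cohomology ring is generated as a ring by $H^{2}(Y,\bZ)=\Pic(Y)$ (in fact by the classes of the torus-invariant prime divisors; see \cite{Oda}), and $f^{*}$ is a ring homomorphism, so it is enough to prove that $f^{*}\vert H^{2}(Y,\bZ)$ has finite order.

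The geometric input is that any $f\in\Aut(Y)$ preserves the nef cone $\mathrm{Nef}(Y)\subset H^{2}(Y,\bR)$. For a smooth projective toric variety the Mori cone $\overline{NE}(Y)$ is rational polyhedral, being generated by the finitely many classes of torus-invariant curves (one for each wall of the fan); dually $\mathrm{Nef}(Y)$ is rational polyhedral, and since $Y$ is projective it is moreover full-dimensional (containing an ample class in its interior) and strictly convex.

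I would then establish a purely lattice-theoretic finiteness statement: the group $\Gamma$ of $\bZ$-linear automorphisms of $H^{2}(Y,\bZ)$ preserving $\mathrm{Nef}(Y)$ is finite. Any $\gamma\in\Gamma$ permutes the finite set of extremal rays of $\mathrm{Nef}(Y)$, and if $\gamma$ fixes each extremal ray then it fixes the primitive lattice generator on each such ray; since $\mathrm{Nef}(Y)$ is full-dimensional, these generators span $H^{2}(Y,\bR)$, forcing $\gamma=\id$. Hence $\Gamma$ embeds into the finite symmetric group on the extremal rays.

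Consequently some positive power of $f^{*}\vert H^{2}(Y,\bZ)$ is the identity, and hence the same is true on the whole cohomology ring. All eigenvalues of $f^{*}$ on $\bigoplus_{k}H^{2k}(Y,\bZ)$ are therefore roots of unity, the spectral radius equals $1$, and $h(f)=\log 1=0$. The only mildly delicate point is the lattice-theoretic finiteness of $\Gamma$, which ultimately rests on the rational polyhedrality of the nef cone --- a toric-specific feature sharply different from, e.g., K3 surfaces, where $\Aut$ can act on cohomology with positive spectral radius.
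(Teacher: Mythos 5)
Your argument is correct and its core is the same as the paper's: both observe that the nef (equivalently, closed ample) cone of a smooth projective toric variety is a rational polyhedral, full-dimensional, strictly convex cone in $H^2(Y,\bR)$, that $f^*$ permutes its finitely many extremal rays (hence their primitive integral generators), and that since these generators span $H^2(Y,\bR)$ some power $(f^*)^M$ is the identity on $H^2$, giving $\rho(f^*|H^2)=1$. The only divergence is in the final reduction to $h(f)=0$: the paper invokes a general compact K\"ahler result of Dinh--Sibony (\cite{DS04}, Corollaire 2.2) to pass from $\rho(f^*|H^2)=1$ to vanishing entropy, whereas you use the toric-specific Jurkiewicz--Danilov fact that $H^*(Y,\bZ)$ is generated in degree $2$, so that finite order on $H^2$ propagates to finite order on the whole even cohomology ring, and then apply Theorem (\ref{gyf}) directly. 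Your route is slightly more self-contained within toric geometry; the paper's route is more flexible (it would apply verbatim to any compact K\"ahler manifold whose nef cone is rational polyhedral and full-dimensional, without any generation-in-degree-two hypothesis). Both are valid.
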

\begin{proof}
Recall that the cone $\overline{\rm NE}\,(Y)$ of numerically effective curves of $Y$ is finite, rational and 
polyhedral (see e.g. \cite{Oda}, Page 107, Proposition 2.26).
Thus, the ample cone $A(Y)\subset H^2(Y,\bR)$ is also  finite, rational and polyhedral,
as it is the dual cone of $\overline{\rm NE}\,(Y)$ (Kleiman's criterion).

Let $f \in {\rm Aut}\,(Y)$ and consider the induced action $f^*$ on $H^2(Y,\bZ)$. Let 
$L_i$  ($1\leq i\leq \ell$) be the $1$-dimensional edges of $\overline{A(Y)}$
and $v_i$ be the primitive vector of $L_i$. Then,
$(f^*)^{\ell !}$ is the identity on $\{ L_1,\dots,L_\ell\}$, hence $(f^*)^{\ell !}(v_i)=v_i$ ($1\leq i \leq \ell$).
As $A(Y)$ is open in $H^2(Y,\bR)$ (because $h^{2,0}(Y)=h^{0,2}(Y)=0$), it follows that
$(f^*)^{\ell !}$ is the identity on $H^2(Y,\bR)$. Hence $\rho\left( f^*| H^2(Y,\bZ)\right)=1$.
Thus, by \cite{DS04} Corollaire (2.2), we have $h(f)=0$.
\end{proof}

\begin{remark}
It is known that ${\rm Aut}\,(Y)$ is generated by three classes of automorphisms: the torus, roots and fan symmetries (see
e.g. \cite{CK99}, Page 48, Theorem 3.6.1). Among these three classes, the torus and the roots are in 
${\rm Aut}^0(Y)$, the identity component of ${\rm Aut}\,(Y)$. So, their action on $H^*(Y,\bZ)$
are the identity. In our construction this is enough, but 
Proposition (\ref{proposition 5.1}) also follows from this description.
\end{remark}

\begin{proposition}\label{proposition 5.2}
Let $Y$ be a non-singular projective toric variety, $f\in{\rm Aut}(Y)$ and
 $F \in {\rm Aut}(S)$ be an automorphism of a compact K\"ahler manifold $S$.
Let 
$$g\, :=\, (F,f)\, \in\,\, {\rm Aut}\, (S\times Y)\, .$$ 
Then $h(g)=h(F)$. In particular,
$g$ is of positive entropy if and only if so is $F$.
\end{proposition}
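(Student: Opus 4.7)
The plan is to apply the Gromov--Yomdin--Friedland Theorem (\ref{gyf}) to $g$ on $S\times Y$ and reduce, via the K\"unneth formula, to a product of spectral radii; the $Y$-factor will then be killed by Proposition (\ref{proposition 5.1}).

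First I would invoke Theorem (\ref{gyf}) to write $h(g)=\log \rho\left(g^*\bigm|\bigoplus_k H^{2k}(S\times Y,\bZ)\right)$. Next I use the fact that a smooth projective toric variety $Y$ admits an algebraic cell decomposition (by torus orbits) with cells of even real dimension only, hence $H^{\mathrm{odd}}(Y,\bZ)=0$ and $H^*(Y,\bZ)$ is torsion free. The K\"unneth formula then gives
$$
H^{\mathrm{even}}(S\times Y,\bZ) \;=\; \bigoplus_{p,q\ \mathrm{even}} H^p(S,\bZ)\otimes H^q(Y,\bZ)\, ,
$$
and this bigraded decomposition is preserved by $g^*=F^*\otimes f^*$, which acts on each summand as the tensor product of the two restrictions.

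Second, because $\rho(A\otimes B)=\rho(A)\rho(B)$ for finite-dimensional operators and the maximum of a product of non-negative quantities over a product index set splits as the product of the maxima, I obtain
$$
\rho\left(g^*\bigm|H^{\mathrm{even}}(S\times Y)\right) \;=\; \rho\left(F^*\bigm|H^{\mathrm{even}}(S)\right)\cdot \rho\left(f^*\bigm|H^{\mathrm{even}}(Y)\right)\, .
$$
Applying Theorem (\ref{gyf}) to $f$ on $Y$, together with Proposition (\ref{proposition 5.1}) (which yields $h(f)=0$), shows that $\rho(f^*|H^{\mathrm{even}}(Y))=1$. Combining with Theorem (\ref{gyf}) for $F$ on $S$ then gives $h(g)=h(F)$, and the ``in particular'' clause follows at once since $h(\cdot)\geq 0$ for holomorphic self-maps of compact K\"ahler manifolds.

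The only mildly delicate step is verifying that the K\"unneth decomposition cleanly respects the even/odd grading, i.e.\ that the ``total even cohomology'' actually splits as a tensor product of the even cohomologies of the two factors; this is exactly what the vanishing of $H^{\mathrm{odd}}(Y,\bZ)$ for smooth projective toric varieties provides. After that, everything is essentially a formal consequence of $\rho(A\otimes B)=\rho(A)\rho(B)$, and I do not anticipate any serious obstacle.
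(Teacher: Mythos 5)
Your proof is correct and follows essentially the same route as the paper: K\"unneth decomposition of $H^{\rm even}(S\times Y)$ (using $H^{\rm odd}(Y)=0$), then killing the $Y$-factor via Proposition (\ref{proposition 5.1}) and Theorem (\ref{gyf}). The only cosmetic difference is in how the $Y$-factor is neutralized --- you conclude $\rho\bigl(f^*\mid H^{\rm even}(Y)\bigr)=1$ by applying Theorem (\ref{gyf}) directly to $f$, while the paper argues summand-by-summand and shows that \emph{every} eigenvalue of $f^*$ on each $H^{2j}(Y,\bZ)$ has modulus $1$ via the $\det f^*=\pm1$ observation; both are valid, and your version is marginally leaner.
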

\begin{proof} Recall that $H^i(Y,\bZ)=0$ for $i$ odd by Jurkiewicz-Danilov's Theorem (see
e.g. \cite{Oda}, Page 134). Then, by K\"unneth formula:
$$
H^{2k}(S\times Y,\bQ)= \oplus_{\ell=0}^k H^{2\ell}(S,\bQ)\otimes 
H^{2k-2\ell}(Y,\bQ).
$$
Here $g^* =F^* \otimes f^*$ on each direct summand. By Proposition (\ref{proposition 5.1}),
we have that $h(f)=0$. Thus, the eigenvalues of $f^*$ on $H^{2k-2\ell}(Y,\bQ)$ are of 
absolute value $1$. In fact, letting $\epsilon_i$ ($1\leq i \leq a$) be the eigenvalues of
$f^*|H^{2k-2\ell}(Y,\bQ)$ counted with multiplicities, then $|\epsilon_i|\leq 1$ ($1\leq i \leq a$).
But $\det f^*=\pm 1$ as $f^*$ is an automorphism of $H^{2k-2\ell}(Y,\bZ)$, so 
$$|\prod_{i=1}^a \epsilon_i|=1\, $$
hence $|\epsilon_i|=1$ ($1\leq i \leq a$). Thus 
$$\rho\left( g^*| H^{2\ell}(S,\bQ)\otimes H^{2k-2\ell}(Y,\bQ)\right)
=\rho\left( F^*| H^{2\ell}(S,\bQ)\right)\, .$$ 
This implies the result.
\end{proof}

We now prove Theorem (\ref{main1})(2). Let $\{\sigma_1,\dots,\sigma_N\}$ be the set
of $d$-dimensional cones of $\Delta$. Then $Y=\cup_{p=1}^N U_p$, where $U_p:={\rm Spec}\bC[\sigma_p^\vee \cap M]$.
As we assume $Y$ is non-singular, each $\bC[\sigma_p^\vee \cap M]$ is 
written as
$$
\bC[\sigma_p^\vee \cap M]=\bC[x^{K_1(p)},x^{K_2(p)},\dots,x^{K_d(p)}]\subset \bC[M]=\bC[x_1^{\pm 1},x_2^{\pm 1},\dots,x_d^{\pm 1}]
$$
and $U_p\cong \AN^d$ with the coordinates $(x^{K_i(p)})$. Here we use multi-index notation, namely
$$
K_i(p)=(k_{i1}(p),k_{i2}(p),\dots,k_{id}(p))\in \bZ^d 
$$
and
$$
 x^{K_i(p)}=x_1^{k_{i1}(p)}x_2^{k_{i2}(p)}\cdots x_d^{k_{id}(p)}.
$$
Let now $a:=(a_1,a_2,\dots,a_d)\in (\bC^*)^d$ and let $f_a \in {\rm Aut}\,(Y)$ be the corresponding automorphism.
 Each $U_p$ is invariant
under $f_a$ and the action of $f_a$ on $U_p$ is given as follows (we use again multi-index notation)
\begin{equation}\label{formula 5.1}
f_a^*(x^{K_1(p)},x^{K_2(p)},\dots,x^{K_d(p)})=(a^{K_1(p)}x^{K_1(p)},a^{K_2(p)}x^{K_2(p)},\dots,a^{K_d(p)}x^{K_d(p)}).
\end{equation}
We have the following 
\begin{lemma}\label{lemma 5.3}
Let $a=(a_1,a_2,\dots,a_d)\in (S^1)^d$ be multiplicatively independent.
Then $f_a\in  {\rm Aut}\,(Y)$ has exactly $N$ fixed points on $Y$.
\end{lemma}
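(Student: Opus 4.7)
The plan is to exhibit $N$ obvious fixed points and then show nothing else can be fixed, using multiplicative independence together with the smoothness of the cones. The $N$ expected fixed points are the torus-fixed points $O_p \in U_p$ attached to the $d$-dimensional cones $\sigma_p$ of $\Delta$: in the coordinates $(x^{K_1(p)}, \dots, x^{K_d(p)})$ of $U_p \cong \mathbb{A}^d$, the point $O_p$ is the origin. These are the $N$ zero-dimensional torus orbits of $Y$, hence pairwise distinct, and substituting $x^{K_i(p)} = 0$ into formula \eqref{formula 5.1} shows that $f_a(O_p) = O_p$ for every $p$.

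Next I would verify there are no other fixed points. Let $P \in Y$ be fixed by $f_a$, and choose $p$ such that $P \in U_p$. Formula \eqref{formula 5.1} says that $P$ is fixed if and only if, for every $i = 1, \dots, d$,
$$a^{K_i(p)}\, x^{K_i(p)}(P) = x^{K_i(p)}(P),$$
so either $x^{K_i(p)}(P) = 0$ or $a^{K_i(p)} = 1$. Because $Y$ is non-singular and $\sigma_p$ is $d$-dimensional, the dual semigroup $\sigma_p^\vee \cap M$ is freely generated by $K_1(p), \dots, K_d(p)$, so these vectors form a $\mathbb{Z}$-basis of $M \cong \mathbb{Z}^d$; in particular, none of them is the zero vector. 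The multiplicative independence of $(a_1, \dots, a_d)$ therefore rules out $a^{K_i(p)} = 1$ for any $i$, forcing $x^{K_i(p)}(P) = 0$ for all $i$, i.e.\ $P = O_p$.

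Combining the two steps, the fixed set of $f_a$ is exactly $\{O_1, \dots, O_N\}$, which has cardinality $N$. The main ingredient is simply the translation of the coordinate formula \eqref{formula 5.1} into an elementary arithmetic statement about $a^K$; the only place where real content enters is the observation that smoothness of the maximal cones promotes the $K_i(p)$ to $\mathbb{Z}$-bases of $M$, which is precisely what allows multiplicative independence to be applied. No serious obstacle is anticipated.
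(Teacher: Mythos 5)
Your argument is correct and matches the paper's proof: both exhibit the $N$ torus-fixed points, observe that each is fixed by $f_a$, and then use formula \eqref{formula 5.1} together with multiplicative independence to show that the origin is the only fixed point of $f_a$ in each affine chart $U_p$. You make explicit the fact (implicit in the paper) that the $K_i(p)$ are nonzero because they form a $\bZ$-basis of $M$, which is exactly the step needed for multiplicative independence to apply.
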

\begin{proof}
The set of $d$-dimensional cones $\{\sigma_i\}_{i=1}^N$ bijectively corresponds to the set of $0$-dimensional 
orbits of $(\bC^*)^d$ (each of which is clearly one point), say 
$$Q_1\, ,\, \dots\, ,\, Q_N$$ 
(see e.g. \cite{Oda} Page 10, Proposition 1.6).
As $f_a\in {\rm Aut}(Y)$ is associated to $a\in(\bC^*)^d$, it follows 
that $f_a(Q_p)=Q_p$, therefore $f_a$ has at least $N$ fixed points.

On the other hand, as $a$ is multiplicatively independent, $f_a$ has exactly one fixed point on each $U_p$ ($1\leq p\leq N$),
 that is the origin of $U_p\cong \AN^d$ under the coordinates  $(x^{K_i(p)})$. This follows directly from formula \eqref{formula 5.1}.
Hence $f_a$ has exactly $N$ fixed points $Q_1,\dots,Q_N$.
\end{proof} 

To conclude the proof of Theorem (\ref{main1}) (2), we take a McMullen's pair 
$(S,F)$. Hence $S^F$, the set of fixed points of $F$, consists of two points $P$ and $Q$ but
$F$ admits an arithmetic Siegel disk only at $Q$. This means that there are 
analytic coordinates $(z_1,z_2)$ at $Q$
such that 
$$F^*(z_1,z_2)=(b_1z_1,b_2z_2)\, ,$$
where $(b_1,b_2)\in (S^1)^2$ is a MAU of length $2$ (see Section 3 for the notation).
Now we use Theorem (\ref{exist}) to find 
$$a_1\, ,\, \dots\, ,\, a_d\, \in S^1$$ such that 
$$b_1\, ,\, b_2\, ,\, a_1\, ,\, \dots\, ,\, a_d$$ 
is a MAU of length $d+2$.
Then, set 
$$g\,:=\,(F,f_a)\, \in\,\, {\rm Aut}\,(S\times Y)\, .$$ 
It follows from Lemma (\ref{lemma 5.3}) that $g$ has exactly $2N$ fixed points
$$(P,Q_1)\, ,\, \dots\, ,\,(P,Q_N)\, ,\, (Q,Q_1)\, ,\, \dots\, ,\, 
(Q,Q_N)\, .$$ However, the first $N$ of these have no Siegel disk. 
Let us show that the last $N$ of these, namely
$$(Q,Q_1)\, ,\,\dots\, ,\,(Q,Q_N)\, ,$$ 
have arithmetic Siegel disks. Using notation as in  Formula 
\eqref{formula 5.1}, we have local coordinates
$(y_i)_{i=1}^d:=(x^{K_i(p)})_{i=1}^d$ at $Q_p\in Y$ such that 
$$
f_a^*(y_1\, ,\, y_2\, ,\, \dots\, ,\, y_d)=(a^{K_1(p)}y_1\, ,\, 
a^{K_2(p)}y_2\, ,\, \dots\, ,\, a^{K_d(p)}y_d)\, .
$$
So, under the local coordinates $(z_1,z_2,y_1,y_2,\dots,y_d)$ on $S\times Y$,
the action of $(F,f_a)$ at $(Q,Q_p)$ is linearized as 
$$
g^*(z_1\, ,\, z_2\, ,\, y_1\, ,\, y_2\, ,\dots\, , y_d)=(b_1z_1,b_2z_2,a^{K_1(p)}y_1,a^{K_2(p)}y_2,\dots\, , a^{K_d(p)}y_d) \, .
$$ 
It remains to prove that 
$$b_1\, ,\, b_2\, ,\,a^{K_1(p)}\, ,\, a^{K_2(p)}\, ,\, \dots\, ,\, a^{K_d(p)}$$ form a MAU. By construction
and by Proposition \eqref{2.7},
each term is an algebraic integer 
of absolute value $1$. Let us show that they are multiplicatively independent. 
Hence take 
$$(\ell_1\, ,\, \ell_2\, ,\, r_1\, ,\, r_2\, ,\, \dots\, ,\, r_d)\, 
\in\, \bZ^{d+2}$$ such that 
\begin{equation}\label{formula 5.2}
b_1^{\ell_1}b_2^{\ell_2}(a^{K_1(p)})^{r_1}(a^{K_2(p)})^{r_2}\cdots (a^{K_d(p)})^{r_d}=1.
\end{equation}
If $(r_1,r_2,\dots,r_d)=(0,0,\dots,0)$, then $\ell_1=\ell_2=0$ since $(b_1,b_2)$ is multiplicatively independent.
Therefore we can assume  $(r_1,r_2,\dots,r_d)\not=(0,0,\dots,0)$.  Then 
$$
b_1^{\ell_1}b_2^{\ell_2}a^{s_1}_1a^{s_2}_2\cdots a^{s_d}_d=1,
$$ 
where 
$$
(s_1,s_2,\dots,s_d)=(r_1,r_2,\dots,r_d)\cdot \left( \begin{matrix}
k_{11}(p)&k_{12}(p)&\dots&k_{1d}(p)\\
k_{21}(p)&k_{22}(p)&\dots&k_{2d}(p)\\
\dots    &\dots    &\dots&\dots    \\
k_{d1}(p)&k_{d2}(p)&\dots&k_{dd}(p)
\end{matrix}\right).
$$
Since $Y$ is complete and non-singular, the primitive vectors of the $1$-dimensional rays of $\sigma_p$
form a $\bZ$-basis of $N\cong\bZ^d$ (see e.g. \cite{Oda} Page 15, Theorem 1.10). Thus the row vectors of the previous matrix,
which generate the dual cone  $\sigma_p^\vee\subset M_{\bR}$, form a $\bZ$-basis of $M$ as well.
We conclude that $(s_1,s_2,\dots,s_d)\not=(0,0,\dots,0)$ and hence we get a contradiction since the sequence
$$b_1\, ,\, b_2\, ,\, a_1\, ,\, a_2\, ,\, \dots\, ,\, a_d$$ is a MAU, hence multiplicatively independent.

This concludes the proof of Theorem (\ref{main1}) (2). 
\par
\vskip 4pt
Let us prove Theorem (\ref{main2}). 
We take a sequence of McMullen's pairs
$$(S_1\, ,\, F_1)\, , \, \dots\, ,\, (S_d,F_d)$$ 
as in Remark (\ref{explicit}). 
Set
$$
g:=(F_1,\dots,F_d)\in {\rm Aut}\, (S_1\times\cdots \times S_d) \, .
$$
As in the proof of Proposition (\ref{proposition 5.2}), by K\"unneth decomposition,
we obtain Theorem (\ref{main2}) (1).

We notice  that
$$(S_1\times\cdots \times S_d)^{g}=\{R = (R_1,\dots,R_d)\,|\, R_i\in \{P_i,Q_i\}, 1\leq i\leq d\}\, .$$
By construction, we have an arithmetic Siegel disk 
at 
$$(Q_1\, ,\, Q_2\, , \dots\, , Q_d)$$ 
but, if $R_i=P_i$ for some $i$, then we have no Siegel disk at $R$ 
by Theorem (\ref{Mc})(3). This completes the proof of Theorem (\ref{main2}). 

\par
\vskip 4pt
{\it Acknowledgements.} We would like to express our thanks to Professors Fabrizio Catanese and Thomas Peternell for their warm encouragement. We are grateful to the renewed program of Alexander von Humboldt Foundation and DFG Forschergruppe 790 ``Classification of algebraic surfaces and compact complex manifolds'', which make our collaboration possible.


\vskip .2cm \noindent
Keiji Oguiso \\
Department of Mathematics, Osaka University\\
Toyonaka 560-0043 Osaka, Japan\\
oguiso@math.sci.osaka-u.ac.jp

\vskip .2cm \noindent
Fabio Perroni \\
Lehrstuhl Mathematik VIII, Mathematisches Institut\\
Universit\"at Bayreuth, D-95440 Bayreuth, Germany\\
fabio.perroni@uni-bayreuth.de 

\end{document}